\newtheorem{theorem}{Theorem}
\newtheorem{lemma}{Lemma}
\newtheorem{proposition}{Proposition}
\title{Non-singular Morse-Smale flows on n-manifolds with attractor-repeller dynamics}
\author{O. V. Pochinka, D. D. Shubin}
\date{HSE University}
\begin{document}
\sloppy	
	\maketitle

\begin{abstract}
In the present paper the exhaustive classification up to topological equivalence of non-singular Morse-Smale flows on $n$-manifolds $M^n$ with exactly two periodic orbits is presented. Denote by $G_2(M^n)$ the set of such flows. Let a flow $f^t:M^n\to M^n$ belongs to the set $G_2(M^n)$.  Hyperbolicity of periodic orbits of $f^t$  implies that among them one is an attracting and the other is a  repelling orbit. Due to the Poincar\'{e}–Hopf theorem, the Euler characteristic of the ambient  manifold $M^n$ is  zero. Only torus and Klein bottle can be ambient manifolds for $f^t$ in case of $n=2$. The authors established that there are exactly two classes of topological equivalence of flows in $G_2(M^2)$ if $M^2$ is the torus and three classes if $M^2$ is the Klein bottle. For all odd-dimensional manifolds the Euler characteristic is zero. However, it is known that an orientable $3$-manifold admits a flow from $G_2(M^3)$ if and only if  $M^3$  is a lens space $L_{p,q}$. In this paper it is proved that every set $G_2(L_{p,q})$ contains exactly two classes of topological equivalence of flows, except the case when $L_{p,q}$ is homeomorphic to the 3-sphere $\mathbb S^3$ or the projective space $\mathbb RP^3$, where such a class is unique. Also, it is shown that the only non-orientable $n$-manifold (for $n>2$), which admits flows from $G_2(M^n)$ is the twisted I-bundle over $(n-1)$-sphere $\mathbb S^{n-1}\tilde{\times}\mathbb S^1$. Moreover, there are exactly two classes of topological equivalence of flows in $G_2(\mathbb S^{n-1}\tilde{\times}\mathbb S^1)$. Among orientable $n$-manifolds only the product of $(n-1)$-sphere and the circle $\mathbb S^{n-1}{\times}\mathbb S^1$ can be ambient manifold for flows from $G_2(M^n)$  and $G_2(\mathbb S^{n-1}{\times}\mathbb S^1)$ splits into two topological equivalence classes.
\end{abstract}
	
\section{Introduction and statement of results}
This article will focus on  \emph{non-singular Morse-Smale flows} (abbreviated as \emph{NMS-flows}), which are Morse-Smale flows without fixed points, given on closed $n$-manifolds $M^n$, $n>1$. The authors obtained the exhaustive topological classification of NMS-flows $f^t:M^n\to M^n$ with exactly two periodic orbits. 
A general theory of hyperbolic dynamical systems (see e.g. \cite{Sm}) implies that the ambient manifold $M^n$ for such a flow $f^t$ is the union of the stable and the unstable manifolds of these orbits. It immediately implies that one of these orbits is an attracting (denote it $A$) and the other is a repelling (denote it $R$).

Let $G_2(M^n)$ be the class on NMS-flows with exactly two periodic orbits. In cases where the results are fundamentally different for orientable and non-orientable manifolds we will use notation $M^n_+, M^n_-$ for orientable and non-orientable manifolds respectively.

Recall that a periodic orbit is called \emph{twisted} if at least one from its invariant manifolds is non-orientable. Otherwise, we call the orbit \emph{untwisted}. 
Poincar\'{e}–Hopf theorem implies that the Euler characteristic of a NMS-flow ambient manifold equals to zero. Considering two-dimensional surfaces we immediately get that this constraint leaves us only the torus and the Klein bottle (actually, both admit NMS-flows). The classification of such flows follows from the classification of Morse-Smale flows on surfaces (see e.g. \cite{Peix}, \cite{OS}, \cite{MaKrPo_OmSt}). We provide an independent classification in the class $G_2(M^2)$ in section ~\ref{se:G2M2}.

\begin{theorem}\label{G2M2} $ $

\begin{enumerate}
\item The set $G_2(M^2_+)$ splits into two topological equivalence classes  of flows (see Fig.~\ref{pictorus}), both with untwisted orbits.
\item The set $G_2(M^2_+)$ splits into two topological equivalence classes  of flows (see Fig.~\ref{picKlein}), two with twisted orbits and one with untwisted orbit.
\end{enumerate}
\end{theorem}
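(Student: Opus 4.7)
The plan is to decompose each flow in $G_2(M^2)$ into a standard model on tubular neighborhoods of its two periodic orbits together with a wandering region, and then classify the possible gluings up to topological equivalence.

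First, I would set up the local and global structure. By hyperbolic linearization, a small tubular neighborhood $N(\gamma)$ of a periodic orbit $\gamma$ of an NMS-flow on a surface is either an annulus (if $\gamma$ is untwisted) or a M\"obius band (if $\gamma$ is twisted), on which the flow is topologically conjugate to a standard radial contraction or expansion suspended over $\gamma$. Fix small closed tubular neighborhoods $N(A), N(R)$ and set $W = M^2\setminus(\mathrm{int}\,N(A)\cup\mathrm{int}\,N(R))$. The flow is transverse to $\partial W$, outgoing along $\partial N(R)$ and incoming along $\partial N(A)$. Since $f^t$ has no further periodic orbits or fixed points, every orbit of the restricted flow in $W$ is a proper arc from $\partial N(R)$ to $\partial N(A)$; the orbit space is then a compact $1$-manifold without boundary, so each component of $W$ is a flow-trivial annulus $S^1\times[0,1]$.

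A parity argument constrains the global structure. Let $b_A, b_R\in\{1,2\}$ be the numbers of boundary circles of $N(A)$ and $N(R)$; each equals $1$ precisely when the corresponding orbit is twisted. Since $\partial W$ has $b_A + b_R$ circles in total and consists of two boundary circles per annular component of $W$, the sum $b_A + b_R$ must be even. Hence $A$ and $R$ have the same twist type. On the torus, orientability forbids M\"obius bands, so both orbits are untwisted and $W$ has two annular components. On the Klein bottle, either both orbits are untwisted (and $W$ has two annular components) or both are twisted (and $W$ is a single annulus joining the two M\"obius bands).

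The heart of the proof is the classification of the possible gluings. Each flow is determined, up to topological equivalence, by how the boundary circles of $W$ are attached to $\partial N(A)$ and $\partial N(R)$, modulo self-homeomorphisms of the pieces preserving the flow direction; such self-homeomorphisms amount to reversing the orientation of each boundary circle independently and, where applicable, swapping the labels of the two components of $W$. Carrying out this bookkeeping gives the following. On $T^2$ the independent cycle in the ``orbit graph'' yields a single binary invariant, namely the relative flow-orientation along the two wandering strips, and both values are realized and inequivalent, producing the two classes of Fig.~\ref{pictorus}. On the Klein bottle with untwisted orbits the same invariant appears, but the requirement that the assembled surface be non-orientable fixes one value of the sign, leaving a single class. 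On the Klein bottle with twisted orbits, the M\"obius-band ends introduce an additional binary choice in the way the single wandering annulus attaches; after factoring out symmetries one checks that exactly two inequivalent classes remain (Fig.~\ref{picKlein}). Altogether $1+2=3$ classes on the Klein bottle.

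The main obstacle lies in the last step: showing that the combinatorial invariants really are invariants of topological equivalence, and that flows sharing them are conjugate by an ambient homeomorphism of $M^2$. The invariance direction demands a normalization making the sign independent of the chosen tubular neighborhoods and cross-sections; the realization direction requires gluing piecewise isotopies on $N(A), N(R)$ and on each flow-annulus of $W$ into a globally consistent, flow-preserving homeomorphism. The twisted Klein-bottle case is the most delicate, as the M\"obius ends force extra compatibility between the local isotopies near $A$, near $R$, and across the connecting strip.
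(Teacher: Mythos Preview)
Your decomposition into tubular neighborhoods plus a wandering region of flow-annuli, the parity argument forcing the two orbits to have the same twist type, and the final counts ($2$ on the torus; $1+2$ on the Klein bottle) are all correct and match the paper's approach: the paper likewise reduces every $f^t\in G_2(M^2)$ to a \emph{model flow} $f^t_j$ obtained by gluing two standard pieces $\mathbb V^2_\pm$ along a boundary homeomorphism $j$, and then classifies the gluings.

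The difference lies in how the ``main obstacle'' you flag is handled. Rather than doing the isotopy-gluing case-by-case, the paper first proves a general criterion (its Theorem in Section~\ref{Crc}, valid for all $n$): $f^t_j$ and $f^t_{j'}$ are topologically equivalent if and only if there is a boundary homeomorphism $h_0$ with $i_*h_{0*}=i_*$ such that $h_1=j'h_0j^{-1}$ also satisfies $i_*h_{1*}=i_*$. In dimension two this reduces to the single statement that $f^t_j\sim f^t_{j'}$ if and only if $j'j^{-1}$ is orientation-preserving on each boundary circle, and an exhaustive check then yields the five representatives $j_1,\dots,j_5$. This criterion simultaneously supplies both directions you identify as delicate: it shows the orientation data are genuine invariants, and its sufficiency proof constructs the conjugating homeomorphism explicitly (by lifting $h_0$ to $\bar V^n$ and extending along flow lines). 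Your combinatorial route would work, but the paper's abstraction avoids the ad~hoc compatibility checks between the M\"obius ends and the wandering strip that you anticipate in the twisted Klein-bottle case, and the same lemma is then reused verbatim for $n\geq 3$.
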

\begin{figure}[!ht]\label{phaseG2M2_3d}
		\begin{center}
			\begin{minipage}[h]{0.49\linewidth}
\center{\includegraphics[width=1\linewidth]{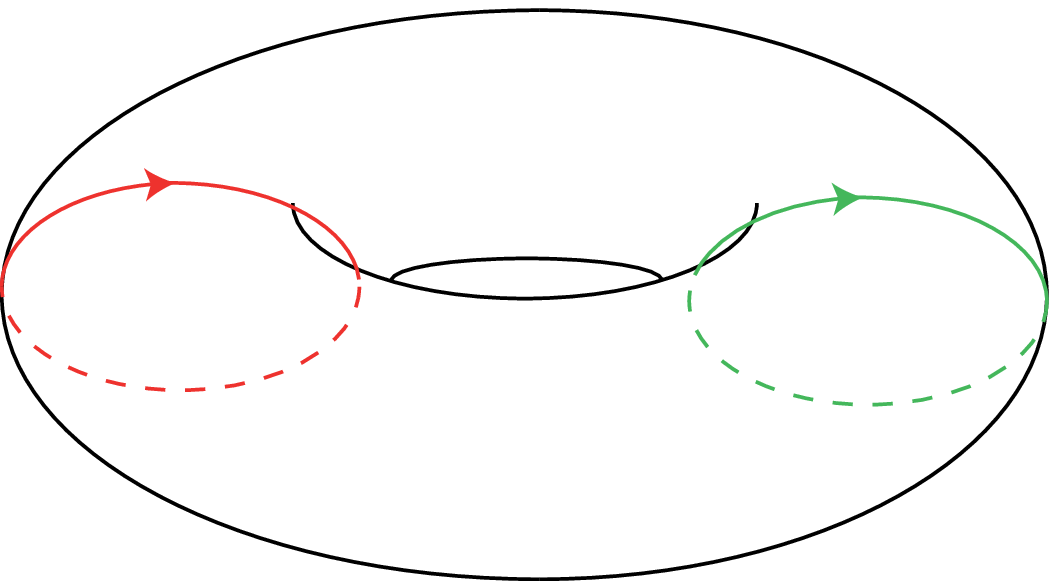} 1.}
				\end{minipage}
			\begin{minipage}[h]{0.49\linewidth}
\center{\includegraphics[width=1\linewidth]{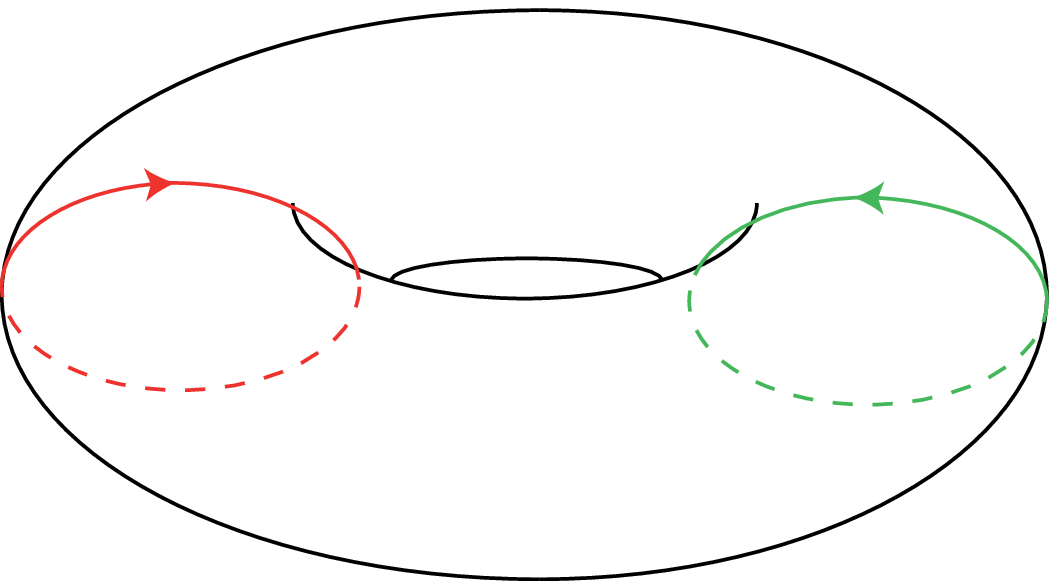} 2.}
			\end{minipage}
		\end{center}
		\caption{Phase portraits of non topologically
equivalent flows on the torus}\label{pictorus}
		\begin{minipage}[h]{0.32\linewidth}
\center{\includegraphics[width=1\linewidth]{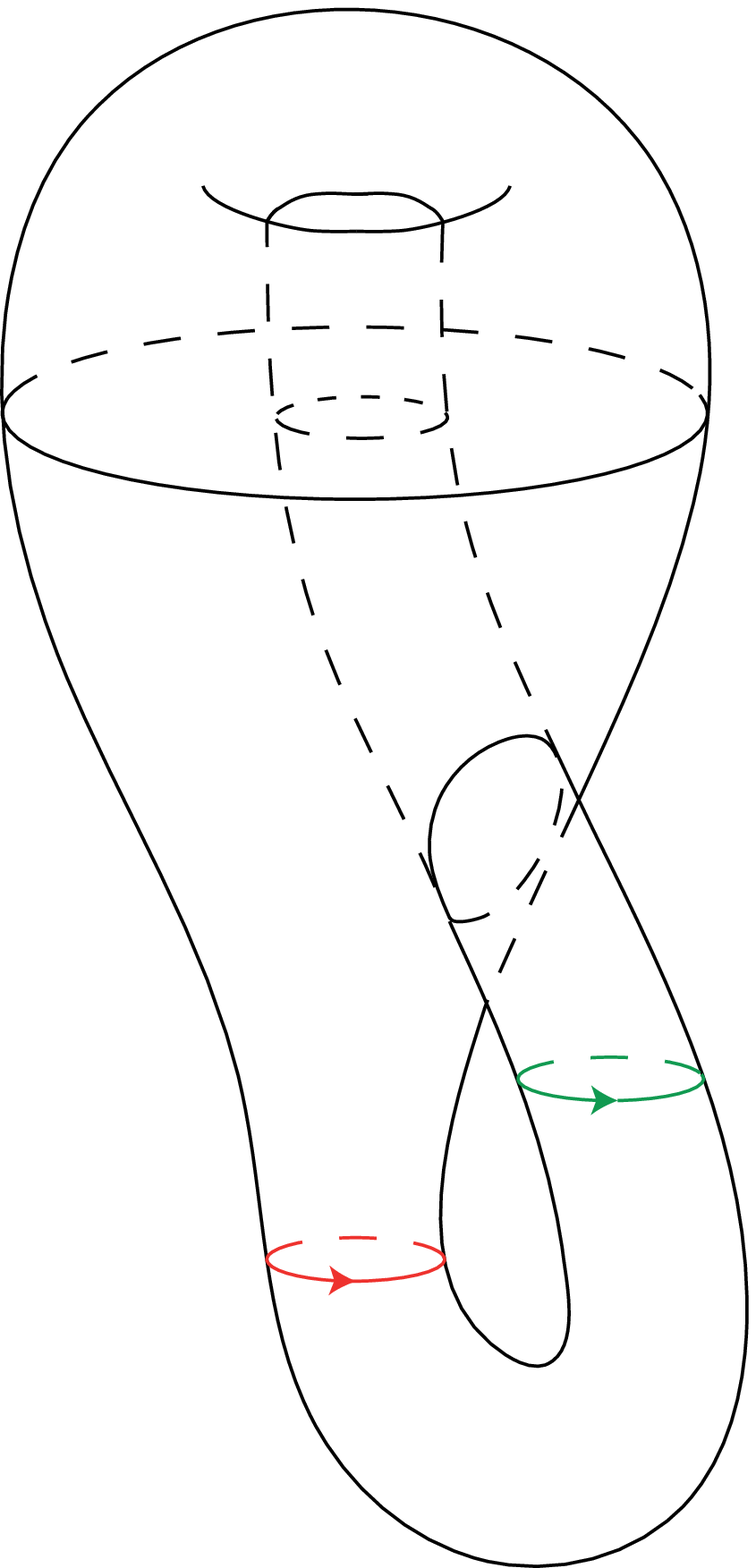} 1.}
		\end{minipage}
				\begin{minipage}[h]{0.32\linewidth}
\center{\includegraphics[width=1\linewidth]{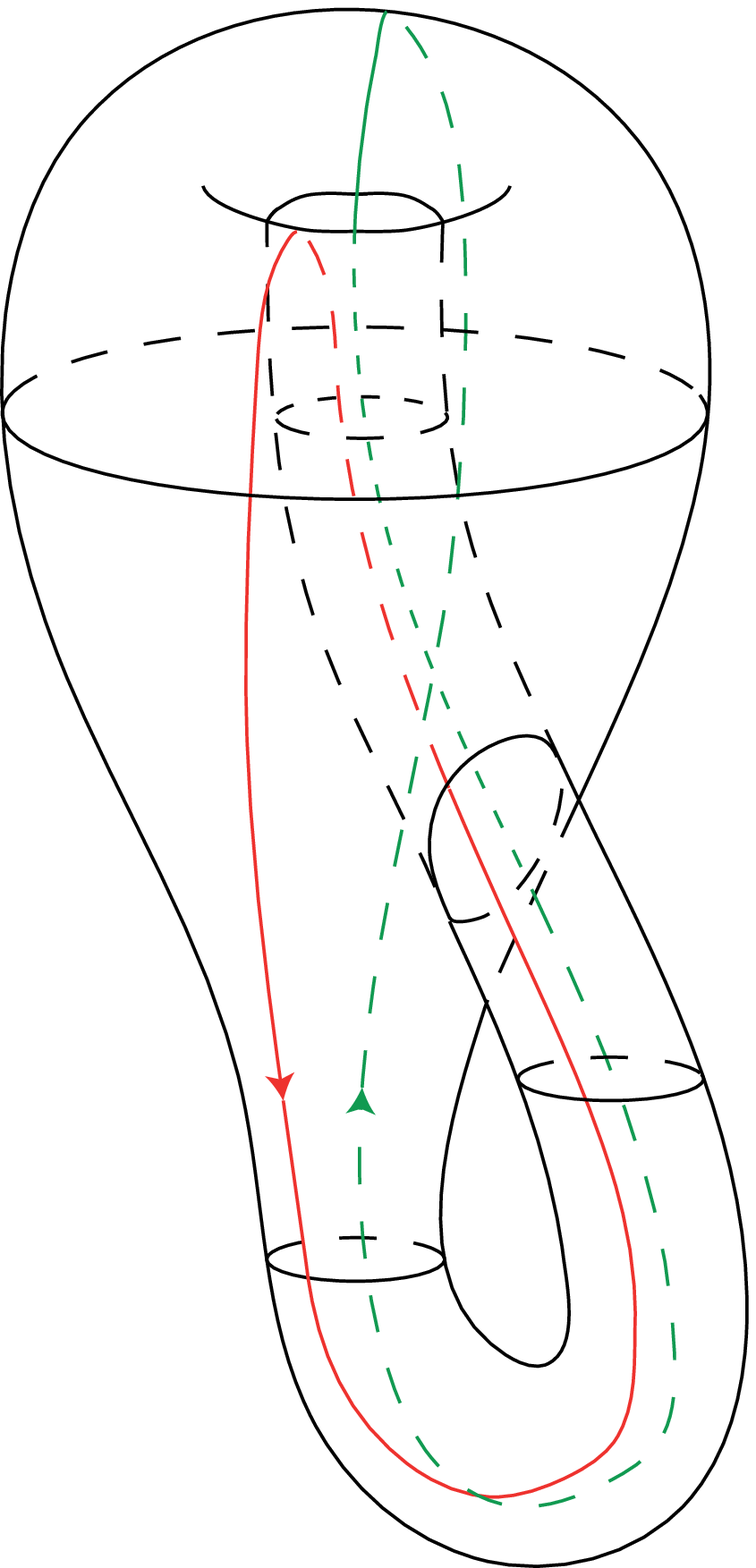} 2.}
		\end{minipage}
\begin{minipage}[h]{0.32\linewidth}
\center{\includegraphics[width=1\linewidth]{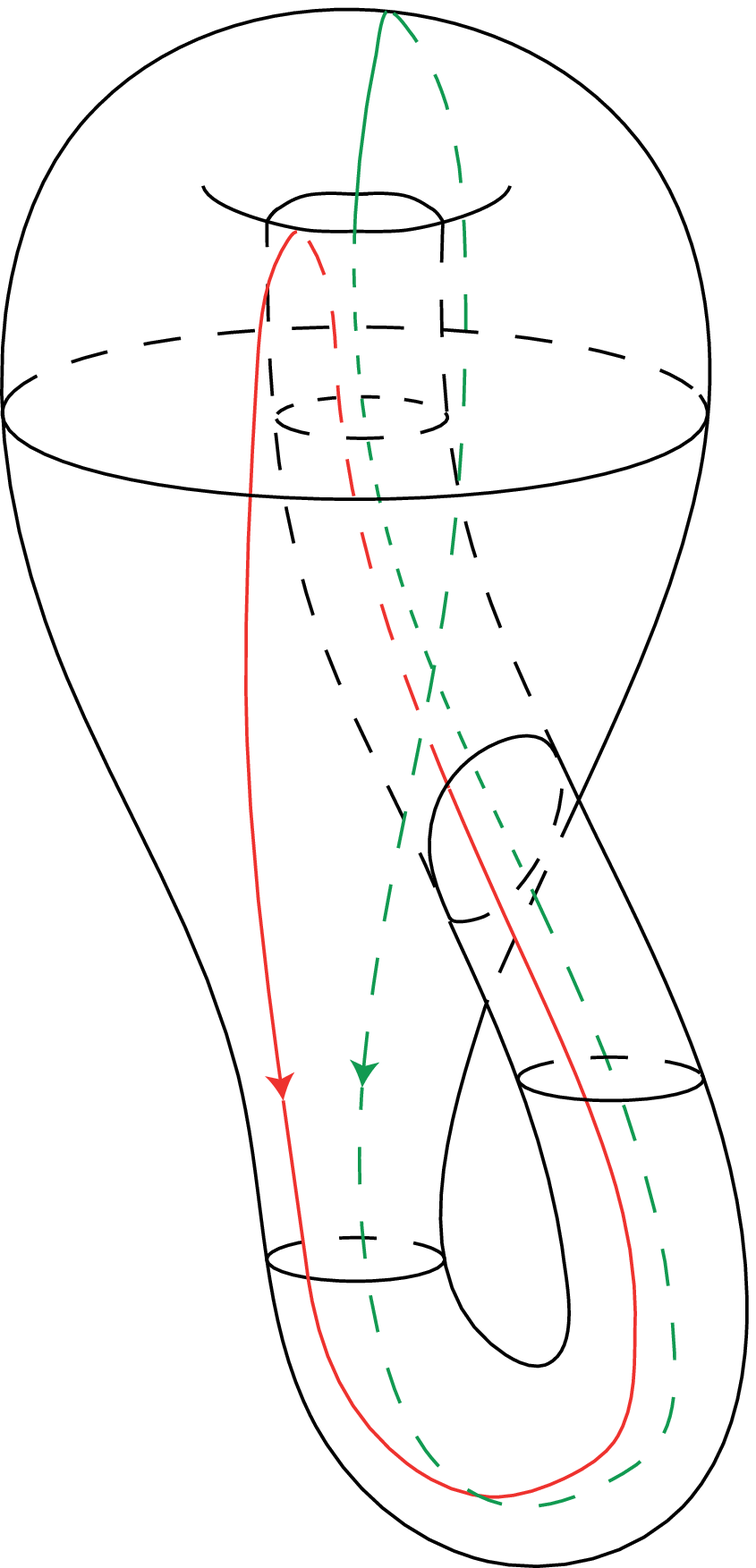} 3.}
		\end{minipage}		
		\caption{Phase portraits of pairwise non topologically
equivalent flows on the Klein bottle: 1. with untwisted orbits; 2-3. with twisted orbits}\label{picKlein}
	\end{figure}

The Euler characteristic of any  odd-dimensional manifold is zero then a priory such a manifold $M^n$ admits a flow from $G_2(M^n)$. For $n=3$ necessary and sufficient conditions for the  topological equivalence of three-dimensional NMS-flows follows from \cite{Umanski}, where larger class of Morse-Smale flows were considered. However, this classification does not allow to say anything about the admissible topology of the ambient manifolds. In the case of a small number of periodic orbits the topology of the ambient manifold and exhaustive topological classification can be established.

Recall, that a \emph{lens space} is defined as the topological space obtained by gluing two solid tori by a homeomorphism of their boundaries and is denoted as $L_{p, q},\ p,q\in \mathbb Z$, where $\langle p, q\rangle$ is the homotopy type of the meridian image under the gluing homeomorphism. Some well known $3$-manifolds are lens spaces, for example, 3-sphere $\mathbb S^3 = L_{1, 0}$, the manifold $\mathbb S^2\times \mathbb S^1 = L_{0, 1}$, the projective space $\mathbb RP^3 = L_{1, 2}$.

It follows from the proposition below that only lens spaces can be ambient manifolds for NMS-flows with a small number of the  periodic orbits.
\begin{proposition}[\cite{CamposCorderoMartinezAlfaroVindel}]\label{flows_are_on_lens}
	Let $M$ be an orientable, simple\footnote{$n$-manifold is called \emph{simple}, if it is impossible to represent it as connected sum of two $n$-manifolds each of which is not $\mathbb S^n$.}, closed 3-manifold without boundary. If $M$ admits an NMS-flow with 0 or 1 saddle periodic orbit, then $M$ is a lens space.
\end{proposition}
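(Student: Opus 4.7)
The plan is to analyze the round-handle decomposition of $M$ induced by the NMS-flow and split on the number $s\in\{0,1\}$ of saddle periodic orbits. Each attracting or repelling periodic orbit contributes an untwisted round $0$- or $2$-handle (the twisted alternative would be a solid Klein bottle, which cannot embed in the orientable $M$), i.e., a solid torus $D^{2}\times S^{1}$; each saddle orbit contributes a round $1$-handle $D^{1}\times D^{1}\times S^{1}$. Denote by $a,r$ the numbers of attracting and repelling orbits.

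For $s=0$, let $W$ be the complement in $M$ of small closed round-handle neighborhoods of all periodic orbits, so $\partial W$ is a disjoint union of $a+r$ tori. The gradient-like flow on $W$ is nonsingular; hence $W$ is foliated by trajectories each beginning on a repeller-boundary torus and ending on an attractor-boundary torus, and each connected component of $W$ is a product $T^{2}\times[0,1]$. This forces a bijection between repeller-tori and attractor-tori, so $M$ decomposes as a disjoint union of $a=r$ pieces, each a union of two solid tori glued along $T^{2}\times[0,1]$. Connectedness of $M$ then forces $a=r=1$, and $M$ is a lens space by definition.

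For $s=1$, the analogous $W$ contains exactly one round $1$-handle (from the saddle). We would enumerate the ways this handle can attach---two annular feet on one, or on distinct, incoming boundary components---tracking the topological type of the outgoing boundary and the homological framing of the attaching curves. The outgoing boundary must be a disjoint union of $r$ tori (since the repeller basins are untwisted solid tori), which rules out all attachings producing a higher-genus boundary component; simplicity of $M$ rules out those producing an essential $2$-sphere. The remaining configuration forces $a=r=1$, and $M$ decomposes as two solid tori glued along a torus---a lens space.

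The main obstacle lies in the $s=1$ case: carefully verifying that, among the various attaching configurations of the round $1$-handle, only those producing a genus-$1$ Heegaard-type splitting of $M$ survive the simplicity and toroidal-boundary constraints. In particular, one must track the framings of the saddle's stable and unstable annuli on the attractor-side boundary tori, and invoke the prime decomposition theorem to exclude attachings that would yield non-trivial summands such as $S^{2}\times S^{1}$ or connected sums with other lens spaces.
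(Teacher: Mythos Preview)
The paper does not give its own proof of this proposition: it is quoted as a result of Campos--Cordero--Mart\'inez Alfaro--Vindel and used as a black box. So there is nothing in the present paper to compare your argument against. (For the special case $s=0$ with $a=r=1$, the paper does give an independent argument in Lemma~\ref{redu}, and your $s=0$ treatment is essentially the same idea: the wandering region between the attractor and repeller tubes is a product $T^{2}\times[0,1]$, so $M$ is a union of two solid tori.)

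That said, your proposal has a genuine gap in the $s=1$ case. You correctly set up the round-handle picture and identify the issues, but you stop at ``we would enumerate the ways this handle can attach'' and ``one must track the framings''---this is exactly the content of the cited paper, and it is not short. Concretely: attaching a round $1$-handle to a union of solid tori along two annuli can leave a boundary consisting of one torus, two tori, or a torus and a $2$-sphere, depending on whether the two attaching annuli lie on the same or on different boundary tori and on the slopes of their core curves; after capping by the repeller solid tori one can obtain not only lens spaces but also $S^{2}\times S^{1}$, connected sums of lens spaces, and certain small Seifert fibred spaces. Your assertion that ``the remaining configuration forces $a=r=1$'' is not justified (for instance $a=2,\ r=1$ and $a=1,\ r=2$ are perfectly compatible with a single saddle), and even when the boundary after the $1$-handle is a single torus, you still need to argue that the piece you have built is itself a solid torus and not, say, a Seifert piece with two singular fibres. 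Ruling out the non-lens outcomes requires a careful case-by-case analysis using simplicity and the slopes of the attaching curves; until that analysis is carried out, the $s=1$ half of the proof is only an outline.
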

As every NMS-flows obligatory has at least one an attracting and at least one a repelling orbit then the complete number of orbits for a flow, satisfying to Proposition  \ref{flows_are_on_lens}, is at least two (exactly two when there is no saddle orbits at all). 

Existence and uniqueness up to topological equivalence of a flow  in the set $G_2(\mathbb S^3)$ follows from the proposition below.
\begin{proposition}[\cite{Yu}] 
	Up to topological equivalence, there exists exactly one NMS-flow $f^t:\mathbb S^3\to\mathbb S^3$ whose periodic orbits are composed of an attractor $A$ and a repeller $R$. Moreover, the periodic orbits $A\sqcup R$ form the Hopf link in $\mathbb S^3$ (see Fig. \ref{link-hoph}).
\end{proposition}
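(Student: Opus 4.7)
The plan is to settle existence and uniqueness separately, using the global dynamics to force a genus-$1$ Heegaard splitting of $\mathbb{S}^3$, which is then known to be essentially rigid.

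For existence, I would exhibit an explicit flow. Realizing $\mathbb{S}^3$ as the unit sphere in $\mathbb{C}^2$, set $A=\{z_2=0\}\cap\mathbb{S}^3$ and $R=\{z_1=0\}\cap\mathbb{S}^3$; these are linked unknots forming the Hopf link. A vector field of the form $\dot z_1=(i\alpha-|z_2|^2)\,z_1$, $\dot z_2=(i\alpha+|z_1|^2)\,z_2$, projected tangentially to $\mathbb{S}^3$, has $A$ hyperbolically attracting, $R$ hyperbolically repelling, and no other non-wandering orbits, so it lies in $G_2(\mathbb{S}^3)$.

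For uniqueness, I would first extract topological structure from the dynamics. Since $A$ and $R$ are the only recurrent sets, $W^s(A)=\mathbb{S}^3\setminus R$ and $W^u(R)=\mathbb{S}^3\setminus A$. Hyperbolicity together with orientability of $\mathbb{S}^3$ yields closed tubular solid tori $V_A,V_R$ with $\partial V_A,\partial V_R$ transverse to the flow. The complement $W=\mathbb{S}^3\setminus(\mathrm{int}\,V_A\cup\mathrm{int}\,V_R)$ is then a flow-box $T^2\times[0,1]$: the flow on $W$ is non-singular, has no recurrent orbits, and travels from $\partial V_R$ to $\partial V_A$, so the Poincar\'e first-return map yields a homeomorphism $h\colon\partial V_R\to\partial V_A$. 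This exhibits $\mathbb{S}^3=V_A\cup_h V_R$ as a genus-$1$ Heegaard splitting with cores $A$ and $R$, and by Alexander's classical theorem such a splitting is standard, whence $A\sqcup R$ is isotopic to the Hopf link.

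The topological equivalence between two such flows $f^t,g^t$ is then assembled from three pieces: the standard hyperbolic model on each solid torus (well-defined because a hyperbolic attracting, resp.\ repelling, orbit in $D^2\times\mathbb{S}^1$ is topologically unique up to orbit-preserving homeomorphism) and the product flow on $W\cong T^2\times[0,1]$. The main obstacle is making these three partial equivalences agree on the boundary tori $\partial V_A,\partial V_R$. This reduces to showing that the Poincar\'e return maps on meridian disks of $V_A$ (resp.\ $V_R$) for $f^t$ and $g^t$ are conjugate by a disk homeomorphism extending to the fixed Heegaard gluing; the standardness of the splitting makes the gluing data rigid enough to allow this, and a small isotopy along $\partial V_A,\partial V_R$ completes the construction.
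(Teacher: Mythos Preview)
Your existence construction and the reduction to a genus-$1$ Heegaard splitting are sound, and the appeal to Alexander's theorem correctly identifies $A\sqcup R$ as the Hopf link. The genuine gap is in your final paragraph, where you assemble the flow equivalence from the three pieces. The phrase ``the standardness of the splitting makes the gluing data rigid enough to allow this'' is doing all the work, and as stated it proves too much: uniqueness of the genus-$1$ Heegaard torus up to isotopy holds not only for $\mathbb{S}^3$ but for \emph{every} lens space $L_{p,q}$ (Bonahon--Otal), yet the paper shows that for $p\neq 1,2$ the set $G_2(L_{p,q})$ contains exactly \emph{two} non-equivalent flows. So standardness of the splitting cannot by itself force the boundary conjugacies to match. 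Concretely, after normalizing the Heegaard torus you obtain partial conjugacies $H_0$ on $V_R$ and $H_1$ on $V_A$; their boundary restrictions $h_0,h_1$ must satisfy $h_1=j'h_0j^{-1}$ \emph{and} each $h_k$ must extend over its solid torus while preserving the oriented core, which forces $h_{k*}=\left(\begin{smallmatrix}1&0\\ m_k&\pm1\end{smallmatrix}\right)$. The resulting matrix equation is solvable iff $r'\equiv r\pmod{|p|}$, which is strictly stronger than the lens-space homeomorphism condition $r'\equiv\pm r\pmod{|p|}$ unless $|p|\leq 2$. Your ``small isotopy'' cannot bridge this discrepancy in general.

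The paper's own argument (which reproves Yu's result as the $p=1$ case of Theorem~\ref{G2M3}) therefore takes a different route from yours: it first reduces every flow in $G_2(M^3_+)$ to a model flow $f^t_j$ on $\mathbb{V}^3_+\cup_J\mathbb{V}^3_+$, then proves the abstract equivalence criterion $i_*h_{0*}=i_*=i_*(j'h_0j^{-1})_*$ (Theorem~\ref{criteria}), converts it via Lemma~\ref{p=p} into the congruence $|p'|=|p|$, $r'\equiv r\pmod{|p|}$, and finally observes that for $p=1$ this congruence is vacuous so all model flows on $\mathbb{S}^3$ collapse to one class. Your geometric outline can be salvaged, but making the matching step precise amounts to proving exactly this matrix lemma in the special case $p=1$; without it the argument is incomplete.
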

\begin{figure}[h] 	
\centerline{\includegraphics[width=0.9\textwidth]{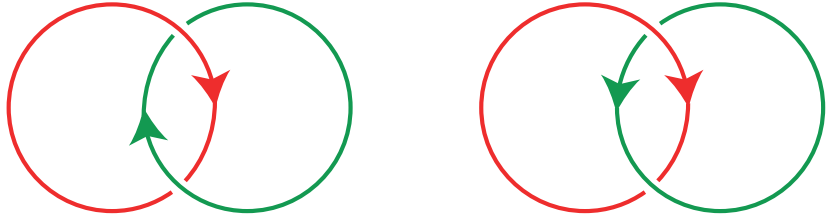}} 
	\caption{Phase portraits of equivalent flows on $3$-sphere}\label{link-hoph}
\end{figure}

In the present paper the exhaustive topological classification of class $G_2(M^3)$ is done.
\begin{theorem}\label{G2M3} $ $
	\begin{enumerate}
\item A manifold $M^3_+$ admits a flow from the set $G_2(M^3_+)$ if and only if  $M^3_+$ is a lens space. The set $G_2(M^3_+)$ splits into two topological equivalence classes of flows (see Fig.~\ref{lens_exmp_1}), except the case when $M^3_+$ is the 3-sphere $\mathbb S^3$ or the projective space $\mathbb RP^3$, where such a class is unique. The periodic orbits are untwisted in any case. 
\item The only non-orientable manifold which admits flow from the set $G_2(M^3_-)$ is the twisted I-bundle over 2-sphere $\mathbb S^2\tilde{\times}\mathbb S^1$. The set $G_2(\mathbb S^2\tilde{\times}\mathbb S^1)$ splits into two topological equivalence classes of flows, both periodic orbits of such flows are twisted.
\end{enumerate}
\end{theorem}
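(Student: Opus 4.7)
The plan has four stages. First, for any $f^t\in G_2(M^3)$, since $A$ is the only attractor and $R$ the only repeller, the identities $W^s(A)=M^3\setminus R$ and $W^u(R)=M^3\setminus A$ hold. Choosing small tubular neighborhoods $N_A\supset A$ and $N_R\supset R$ with flow transverse to the boundaries (entering $N_A$, exiting $N_R$), the complement is swept out by orbits, and the flow induces a diffeomorphism $\phi\colon\partial N_R\to\partial N_A$, yielding the decomposition $M^3=N_A\cup_\phi N_R$.

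Next, I would identify the ambient manifold. In dimension three the tubular neighborhood of a periodic orbit is either the solid torus $D^2\times S^1$ (untwisted orbit) or the twisted $D^2$-bundle $D^2\tilde{\times}S^1$ (twisted orbit), with boundary $T^2$ or the Klein bottle $K^2$ respectively. The gluing requires $\partial N_A\cong\partial N_R$, so the two orbits have the same twist type. For part (1), orientability of $M^3_+$ excludes twisted bundles, so both $N_\ast$ are solid tori, and $M^3_+$ is a genus-one Heegaard splitting, hence a lens space $L_{p,q}$; conversely each $L_{p,q}$ supports such a flow by equipping the two solid tori with their standard attracting/repelling flows. For part (2), both $N_\ast$ are twisted bundles glued along $K^2$; I would show the resulting closed 3-manifold is necessarily $\mathbb{S}^2\tilde{\times}\mathbb{S}^1$ by computing $\pi_1=\mathbb{Z}$ via van Kampen, exhibiting the orientation double cover as $\mathbb{S}^2\times\mathbb{S}^1$ (which itself carries a lifted flow in $G_2(\mathbb{S}^2\times\mathbb{S}^1)$), and appealing to the classification of closed 3-manifolds of this type.

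Third, to classify flows up to topological equivalence on a fixed admissible manifold, I would translate flow equivalence into a combinatorial condition on the gluing data. Given $N_A$, $N_R$ and the gluing $\phi$, the equivalence classes correspond to orbits of the group of self-homeomorphisms of the ambient manifold that send $N_A\to N_A$ and $N_R\to N_R$ --- no swap is permitted, since attractor and repeller are canonically distinguished --- and that preserve the flow longitudes on the boundary tori (the curves isotopic to the orbit direction of the linearized Poincar\'e map). In the orientable case this yields two classes on each $L_{p,q}$ in general, distinguishing two choices of twist inherited from the Heegaard decomposition. The collapse to a single class on $\mathbb{S}^3$ and $\mathbb{R}P^3$ I would verify by exhibiting an explicit flow-equivalence between the two candidate representatives, using the homogeneous structures arising from the Hopf fibration on $\mathbb{S}^3$ and its $\mathbb{Z}/2$-quotient on $\mathbb{R}P^3$. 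For part (2), the analogous enumeration on $\mathbb{S}^2\tilde{\times}\mathbb{S}^1$ using the mapping class group of $K^2$ likewise yields two classes, corresponding to the two essentially distinct flow-compatible gluings of twisted $D^2$-bundles.

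The main obstacle will be the precise enumeration in the third step: cleanly separating flow equivalences from homeomorphisms of the underlying manifold, and verifying the exact collapse to a single class for $\mathbb{S}^3$ and $\mathbb{R}P^3$. The subtlety is that the lens-space classification allows all four identifications $q\sim\pm q^{\pm 1}\pmod p$, of which the swap $q\sim q^{-1}$ is forbidden as a flow-equivalence (it would interchange the two solid-torus cores and thereby exchange attractor with repeller); only a strict subfamily of these homeomorphisms lifts to an orbit-preserving self-map of the flow, and producing this subfamily explicitly --- together with the mirror situation over $K^2$ for the non-orientable case --- is the technically delicate part of the argument.
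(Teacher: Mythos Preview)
Your overall architecture matches the paper's: reduce to a gluing of two standard pieces (solid tori or solid Klein bottles), identify the ambient manifold, then classify gluings modulo those boundary homeomorphisms that extend to flow equivalences. The paper formalizes this via ``model flows'' and proves (their Theorem~3) that $f^t_j$ and $f^t_{j'}$ are equivalent iff there is a boundary homeomorphism $h_0$ with $i_* h_{0*}=i_*$ and $i_*(j'h_0 j^{-1})_*=i_*$ --- exactly your condition that the oriented flow longitude be preserved on each side.

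Where your sketch goes wrong is in locating which lens-space identification is blocked. You are right that the swap $q\sim q^{-1}$ is forbidden (it exchanges attractor and repeller), but this is \emph{not} the restriction that produces two flow classes. The paper already imposes the no-swap convention before quoting the lens classification, which in that setting reads $r'\equiv \pm r \pmod{|p|}$ (their Proposition~3 and its footnote, with $j_*=\left(\begin{smallmatrix} r & p\\ s & q\end{smallmatrix}\right)$). The \emph{further} restriction coming from longitude preservation is what kills the sign, giving $r'\equiv r\pmod{|p|}$ (their Lemma~3). Thus the two candidate flow classes on a fixed $L_{p,q}$ are distinguished by $r$ versus $-r$ modulo $p$, not by anything involving $q^{-1}$. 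Your proposal never isolates this invariant, so you have no mechanism to prove that the two representatives are genuinely \emph{inequivalent} when $p\neq 1,2$; you only sketch the collapse direction (Hopf-type constructions on $\mathbb S^3$ and $\mathbb{RP}^3$) and not the separation. The paper settles both directions at once: the two classes coincide iff $2r\equiv 0\pmod p$ with $\gcd(r,p)=1$, which forces $p\in\{1,2\}$.

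For the non-orientable part your plan is essentially the paper's: the paper cites the mapping class group of the Klein bottle (Lickorish) to see that every $j$ satisfies $i_*j_*=\pm i_*$, and the identification of the glued manifold as $\mathbb S^2\tilde\times\mathbb S^1$ is simply quoted from the literature rather than recovered via $\pi_1$ and double covers.
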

\begin{figure}[h!]
	\centering
	\begin{minipage}[b]{0.495\textwidth}
\center{\includegraphics[width=0.8\linewidth]{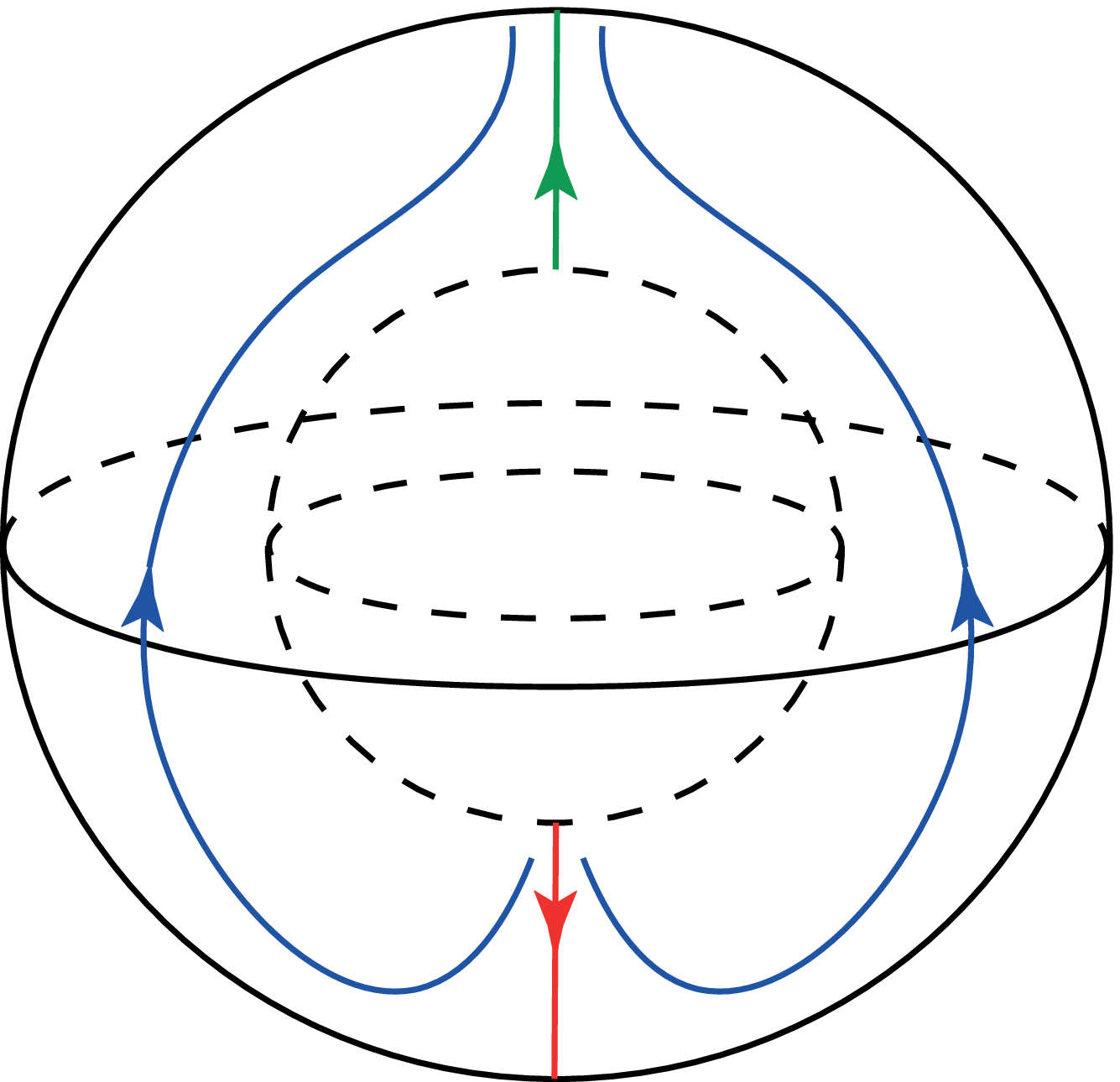}}
	\end{minipage}
	\hfill
	\begin{minipage}[b]{0.495\textwidth}
\center{\includegraphics[width=0.8\linewidth]{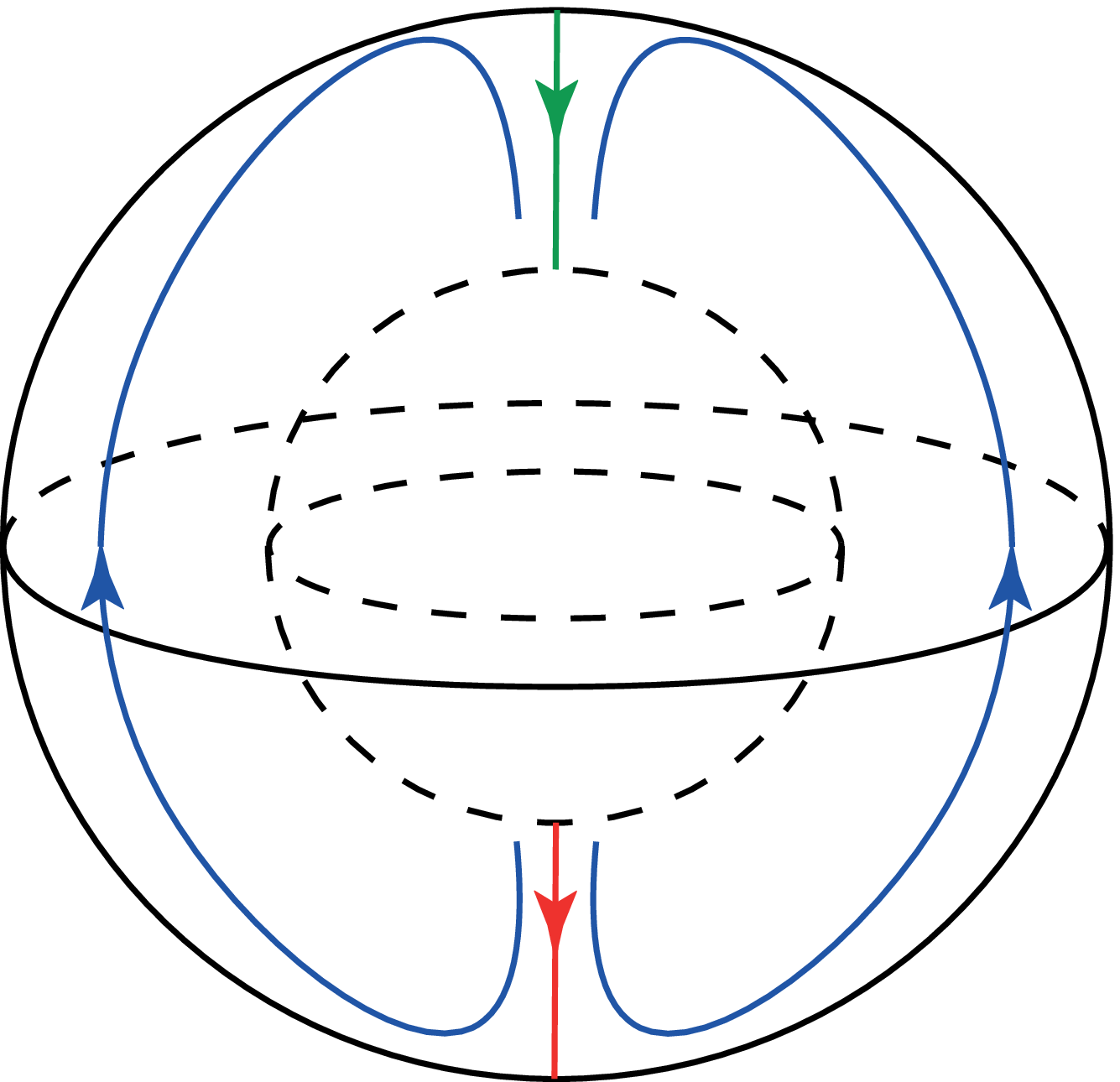}}
	\end{minipage}
	\caption{Phase portraits of non topologically
equivalent flows on $\mathbb{S}^2\times \mathbb{S}^1$}\label{lens_exmp_1}
\end{figure}

Exhaustive classification of $G_2(M^n), n>3$ follows from the  theorem below.
\begin{theorem}\label{G2Mn} $ $
\begin{enumerate}
\item A manifold $M^n_+$ admits a flow from the set $G_2(M^n_+)$ if and only if  $M^n_+$ is homeomorphic to $ \mathbb S^{n-1}\times \mathbb S^1$. The set $G_2(\mathbb S^{n-1}{\times} \mathbb S^1)$ splits into two topological equivalence classes of flows, both periodic orbits of such flows are untwisted.
\item A manifold $M^n_-$ admits a flow from the set $G_2(M^n_-)$ if and only if  $M^n_-$ is homeomorphic to $\mathbb S^{n-1}\widetilde{\times} \mathbb S^1$. The set $G_2(\mathbb S^{n-1}\widetilde{\times} \mathbb S^1)$ splits into two topological equivalence classes of flows, both periodic orbits of such flows are twisted.
\end{enumerate}
\end{theorem}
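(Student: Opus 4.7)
My approach extends the strategy already successful for $G_2(M^2)$ and $G_2(M^3)$: decompose $M^n$ into canonical tubular neighbourhoods of $A$ and $R$ linked by a flow-induced gluing, then use the hypothesis $n>3$ (via simple connectivity of $\mathbb S^{n-2}$) to rigidify the gluing. For $f^t\in G_2(M^n)$, I would take small closed tubular neighbourhoods $N(A),N(R)$ with boundaries transverse to the flow. Absence of saddles forces every trajectory in the complement $W=M^n\setminus(\operatorname{int}N(A)\cup\operatorname{int}N(R))$ to travel from $\partial N(R)$ to $\partial N(A)$, and a flow-box argument identifies $W$ with the cylinder $\partial N(R)\times[0,1]$ and yields a canonical gluing homeomorphism $\phi_{f^t}:\partial N(R)\to\partial N(A)$. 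Each $N(\cdot)$ is a $D^{n-1}$-bundle over $\mathbb S^1$, hence is either $D^{n-1}\times\mathbb S^1$ (untwisted orbit) or the orientation-reversing mapping torus $D^{n-1}\widetilde{\times}\mathbb S^1$ (twisted orbit). Since the two possible boundaries $\mathbb S^{n-2}\times\mathbb S^1$ and $\mathbb S^{n-2}\widetilde{\times}\mathbb S^1$ differ in orientability for $n>3$, the existence of $\phi_{f^t}$ forces $A$ and $R$ to share twisting type, and orientability of the total bundle yields the dichotomy: $M^n$ is orientable iff both orbits are untwisted, non-orientable iff both are twisted.

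Next I would identify $M^n$. In the orientable case the meridian $\mu_R\subset\partial N(R)$ is an $(n-2)$-sphere generating $H_{n-2}(\partial N(R))\cong\mathbb Z$, so $\phi_{f^t}(\mu_R)$ represents $\pm[\mu_A]\in H_{n-2}(\partial N(A))$. Because $n-2\ge 2$ and $\pi_1(\mathbb S^{n-2})=1$, any embedded $(n-2)$-sphere realising this class is ambient isotopic to $\mu_A$, so after an isotopy I may arrange $\phi_{f^t}(\mu_R)=\mu_A$. The two meridian disks $D^{n-1}_A,D^{n-1}_R$ then close up to a non-separating $(n-1)$-sphere $\Sigma\subset M^n$, and transporting $\Sigma$ along the $\mathbb S^1$-direction of $N(A)$ realises $M^n$ as an $\mathbb S^{n-1}$-bundle over $\mathbb S^1$. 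There are exactly two such bundles, distinguished by orientability, so the dichotomy from the previous step pins $M^n$ down as $\mathbb S^{n-1}\times\mathbb S^1$ or $\mathbb S^{n-1}\widetilde{\times}\mathbb S^1$. Existence in each case is supplied by an explicit model: suspending the standard north-pole attractor / south-pole repeller flow on $\mathbb S^{n-1}$ by the identity (respectively, by an orientation-reversing involution) produces a flow in $G_2(\mathbb S^{n-1}\times\mathbb S^1)$ (respectively, in $G_2(\mathbb S^{n-1}\widetilde{\times}\mathbb S^1)$).

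To finish, I must enumerate equivalence classes. Two flows in $G_2(M^n)$ are topologically equivalent precisely when their gluing homeomorphisms lie in the same orbit of the diagonal action of the \emph{flow-respecting} (orbit- and time-preserving) mapping class groups of $N(A)$ and $N(R)$ on $\pi_0\operatorname{Diff}(\partial N(A))$. A flow-respecting self-map of $N(\cdot)$ preserves the $\mathbb S^1$-orientation (otherwise it would reverse time) but may freely reverse the $D^{n-1}$-orientation, so the residual invariant is the sign with which $\phi_{f^t}$ acts on $H_1(\partial N(A))\cong\mathbb Z$, equivalently whether the flow-induced identification co-orients or reverses the core circles of $A$ and $R$. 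This yields at most two classes, and pairing the two sign regimes with the two model flows above makes the count exactly two. The principal obstacle I expect is this last step: verifying that the diagonal action really collapses every further invariant that could distinguish gluings, in particular any Dehn-twist-like generator of $\pi_0\operatorname{Diff}(\partial N(A))$ arising from non-trivial elements of $\pi_1(\operatorname{SO}(n-1))$ (which is non-trivial already for $n=4$). I would address this in the spirit of the analysis used for Theorem~\ref{G2M3}, by absorbing such a twist into a rotation of the core circle of one of the $N(\cdot)$'s, thereby preserving the equivalence class of the flow.
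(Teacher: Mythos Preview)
Your strategy is sound and reaches the same conclusion as the paper, but the routes diverge in two places.  For the identification of the ambient manifold the paper does \emph{not} build a non-separating $(n-1)$-sphere: it invokes the results of Max \cite{Max} and Jahren--Kwasik \cite{Kwasik} asserting that every self-homeomorphism of $\partial\mathbb V^n_\pm$ extends over $\mathbb V^n_\pm$, so the gluing is immaterial and $M^n_j$ is forced to be $\mathbb S^{n-1}\times\mathbb S^1$ or $\mathbb S^{n-1}\widetilde{\times}\mathbb S^1$.  Your argument is more self-contained, but the step ``any embedded $(n-2)$-sphere in $\partial N(A)\cong\mathbb S^{n-2}\times\mathbb S^1$ realising $\pm[\mu_A]$ is ambient isotopic to $\mu_A$'' is itself non-trivial: it requires lifting to the cover $\mathbb S^{n-2}\times\mathbb R$, applying a Schoenflies-type theorem there, and then pushing the isotopy back down.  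That is doable, but you should flag it as a genuine lemma rather than a one-line remark.

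For the equivalence count the paper again bypasses your hands-on analysis: it applies the already-established Criteria (Theorem~\ref{criteria}), which says two model flows are equivalent iff there is $h_0$ with $i_*h_{0*}=i_*$ and $i_*(j'h_0j^{-1})_*=i_*$.  Since $\pi_1(\partial\mathbb V^n_\pm)\cong\mathbb Z$ for $n>3$, every $j$ acts on $\pi_1$ by $\pm1$, and taking $h_0=\mathrm{id}$ shows the sign of $j_*$ is a complete invariant; this is exactly your ``sign on $H_1$''.  Your worry about Gluck-type twists coming from $\pi_1(\mathrm{SO}(n-1))$ is legitimate in your framework, and your proposed fix (extend the twist radially over the disk bundle, which visibly commutes with the suspension flow) is the correct one---indeed this is what the sufficiency proof of Theorem~\ref{criteria} does in general.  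One genuine loose end: to get \emph{exactly} two classes on each manifold you must exhibit two inequivalent flows, but your explicit models (suspension by $\mathrm{id}$, suspension by an orientation-reversing involution) live on \emph{different} manifolds; you still need, on each fixed $M^n$, a second model realising the opposite $\pi_1$-sign (e.g.\ the model flow $f^t_j$ with $j(x,\theta)=(x,-\theta)$).
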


{\it Acknowledgment.} This work was supported by the Russian Science Foundation (project 21-11-00010) except for the section \ref{Crc} which is partially supported by Laboratory of Dynamical Systems and Applications NRU HSE, by Ministry of Science and Higher Education of the Russian Federation (ag. 075-15-2019-1931) and section \ref{se:G2M2} which was prepared within the framework of the Academic Fund Program at the HSE University in 2021-2022 (grant № 21-04-004).

\section{General properties of NMS-flows}\label{vsp}

In this section we provide properties of the NMS-flows which are necessary for the subsequent proofs.

Flows $f^t$ and $f'^t$ on a  manifold $M^n$ are said to be \emph{topologically equivalent} if there is a homeomorphism $h\colon M^n\to M^n$ which sends orbits of $f^t$ into orbits of $f'^t$ and preserves the orientation on the orbits.

To describe the behaviour of a flow $f^t\colon M^n\to M^n$ in a neighbourhood of an attracting or repelling hyperbolic orbit we use the following notion of a  \emph{suspension}.

Define a diffeomorphism $a_{\pm}:\mathbb R^{n-1}\to\mathbb R^{n-1}$ by  
$$a_ {\pm} (x_1,x_2, ..., x_{n-1}) = (\pm 2x_1,2x_2, ..., 2x_{n-1}).$$
Let $g_\pm\colon \mathbb{R}^n \to \mathbb{R}^n$ be a diffeomorphism defined by 
$$g_\pm(x, r)=(a_{\pm}(x), r - 1).$$
Let $\Pi_{\pm}=\mathbb R^{n}/{\langle g_{\pm}\rangle}$ and denote the natural projection by $v_{\pm}\colon \mathbb R^n\to\Pi_{\pm}$.
Define a flow $\bar b^t$ on $\mathbb R^n$ by the system of the following differential equations:
$$
\left\{
\begin{aligned}
	\dot x_1=0, \\
	\dots, \\
	\dot x_{n-1}=0, \\
	\dot x_n=1.\\
\end{aligned}
\right.$$

The natural projection $v_{\pm}$ induces a flow $b^t_{\pm}=v_{\pm}\bar b^t v_{\pm}^{-1}:\Pi_{\pm}\to\Pi_{\pm}$ which is called \emph{suspension}.

\begin{proposition}[\cite{Irwin}]\label{prop:irwin-eqv}
Every hyperbolic repelling orbit $R$ of a flow $f^t\colon M^n\to M^n$ possesses the unstable manifold $W^u_{R}=\{x\in S\mid f^t(x)\to R\,\, \text{as}\,\, t\to-\infty\}$ with the following properties:
\begin{enumerate}
\item there is a value $\delta_R\in\{-,+\}$ and a homeomorphism $h_R\colon \Pi_{\delta_R}\to W^u_R$ which provides the topological equivalence of the flows $b^t_{\delta_R}$ and $f^t|_{W^u_R}$. The orbit $R$ is twisted, if $f^t|_{W^u_R}$ is equivalent to $b^t_-$ and is untwisted otherwise.
\item $W^u_R$ is diffeomoprhic to $\mathbb R^{n-1}\times \mathbb S^1$ if $R$ is untwisted and is diffeomorphic to $\mathbb R^{n-1} \widetilde{\times} \mathbb S^1$ if $R$ is twisted.
	\end{enumerate}
\end{proposition}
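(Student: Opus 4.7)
The plan is to linearize the flow transversely to $R$ and then globalize via the flow. I would fix a point $p\in R$, choose a small $(n-1)$-disk $D\subset M^n$ transverse to $R$ at $p$, and consider the first return map $P\colon D\to D$. Since $R$ is a repelling hyperbolic orbit, $p$ is a hyperbolic expanding fixed point of $P$: every eigenvalue of $L:=dP_p$ has modulus strictly greater than $1$. The Grobman--Hartman theorem conjugates $P$ topologically to $L$ on a neighbourhood of $p$, and a linear expansion of $\mathbb R^{n-1}$ is in turn topologically conjugate to exactly one of the model maps $a_+$, $a_-$, the choice being determined by whether $L$ preserves or reverses orientation (equivalently, by the sign of $\det L$). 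Call this sign $\delta_R\in\{+,-\}$.

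Next I would suspend the resulting normal form. Because $R$ is a repeller, every point of $W^u_R\setminus R$ is eventually swept into any fixed neighbourhood of $p$ by $f^{-t}$ as $t\to\infty$, so $W^u_R$ is the $f^t$-saturation of $D$. The orbit space of $f^t$ on $W^u_R\setminus R$ is naturally $D/P$, and the manifold $\Pi_{\delta_R}$ is by construction the analogous mapping torus built from $a_{\delta_R}$. Therefore the local $P$-equivariant conjugacy obtained above extends, flow-saturated, to a homeomorphism $h_R\colon\Pi_{\delta_R}\to W^u_R$ sending orbits of $b^t_{\delta_R}$ to orbits of $f^t|_{W^u_R}$ with consistent time direction, proving part~(1). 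The orbit $R$ is twisted precisely when $\delta_R=-$, because only then is $W^u_R$ non-orientable.

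Part~(2) is immediate from the construction of $\Pi_{\pm}$ as the mapping torus of $a_{\pm}$ over $\mathbb S^1$. Since $a_+$ preserves orientation of $\mathbb R^{n-1}$, the mapping torus is the trivial bundle $\mathbb R^{n-1}\times\mathbb S^1$, while $a_-$ reverses the orientation of the first coordinate, giving the twisted bundle $\mathbb R^{n-1}\tilde{\times}\mathbb S^1$. Transporting along $h_R$ identifies $W^u_R$ with the claimed model.

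The main obstacle is the linearization step and its subsequent globalization. Grobman--Hartman yields only a $C^0$ conjugacy, but since only topological equivalence is required this is adequate. The more delicate point is to push the local conjugacy $f^t$-equivariantly over all of $W^u_R$: this relies on the fact that every orbit in $W^u_R\setminus R$ visits the local disk in a unique $P$-orbit, so the conjugation relation determines the extension off $D$ uniquely, and uniform hyperbolicity guarantees that the extended map remains a homeomorphism on the full unstable manifold.
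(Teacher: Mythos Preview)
The paper does not supply its own proof of this proposition; it is stated as a cited result from Irwin~\cite{Irwin} and used as a black box. Your sketch is essentially the classical argument behind that reference: take the Poincar\'e return map on a transverse disk, linearize it via Grobman--Hartman, invoke the topological classification of expanding linear automorphisms of $\mathbb R^{n-1}$ (determined solely by the sign of the determinant), and then suspend to recover the flow model $b^t_{\delta_R}$ on $\Pi_{\delta_R}$. This is correct in outline and is precisely the content of Irwin's classification of elementary cycles.

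Two points you would need to make explicit in a full write-up: first, the fact that any expanding linear isomorphism of $\mathbb R^{n-1}$ is topologically conjugate to $a_+$ or $a_-$ is itself a nontrivial lemma (it fails smoothly, and even in the $C^0$ category it requires an argument, e.g.\ via radial coordinates and the fact that $GL(n-1,\mathbb R)$ has two components); second, the globalization step---extending the local Hartman conjugacy over the whole unstable manifold by pushing along orbits---requires checking continuity at $R$ itself, not just on $W^u_R\setminus R$, since the extension is defined orbit-by-orbit away from the periodic orbit. Both are standard, and you correctly flag the latter as the delicate point.
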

\begin{figure}[h!]
	\centering
	\begin{minipage}[b]{0.495\textwidth}
\center{\includegraphics[width=0.8\linewidth]{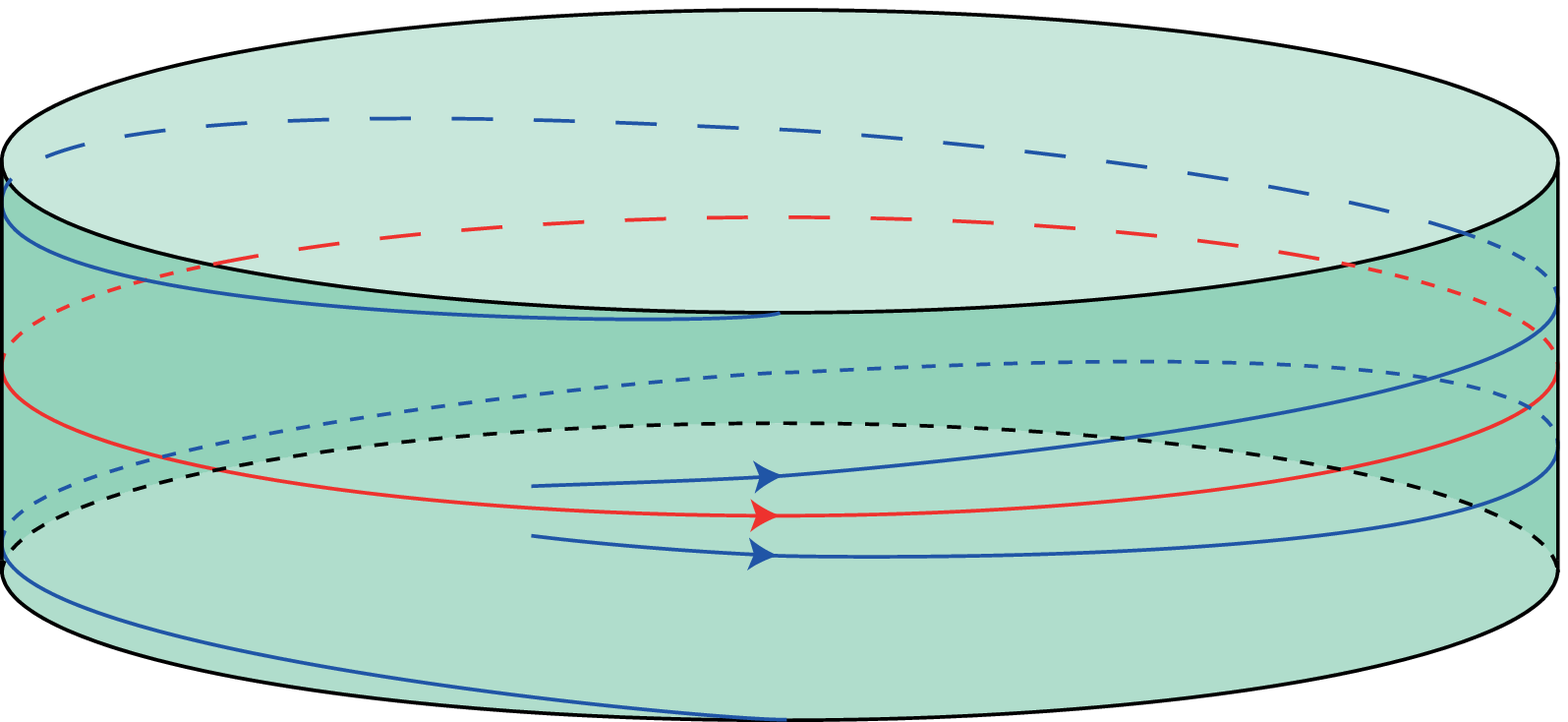}}
		\\
		Untwisted orbit for $n=2$
	\end{minipage}
	\hfill
	\begin{minipage}[b]{0.495\textwidth}
\center{\includegraphics[width=0.8\linewidth]{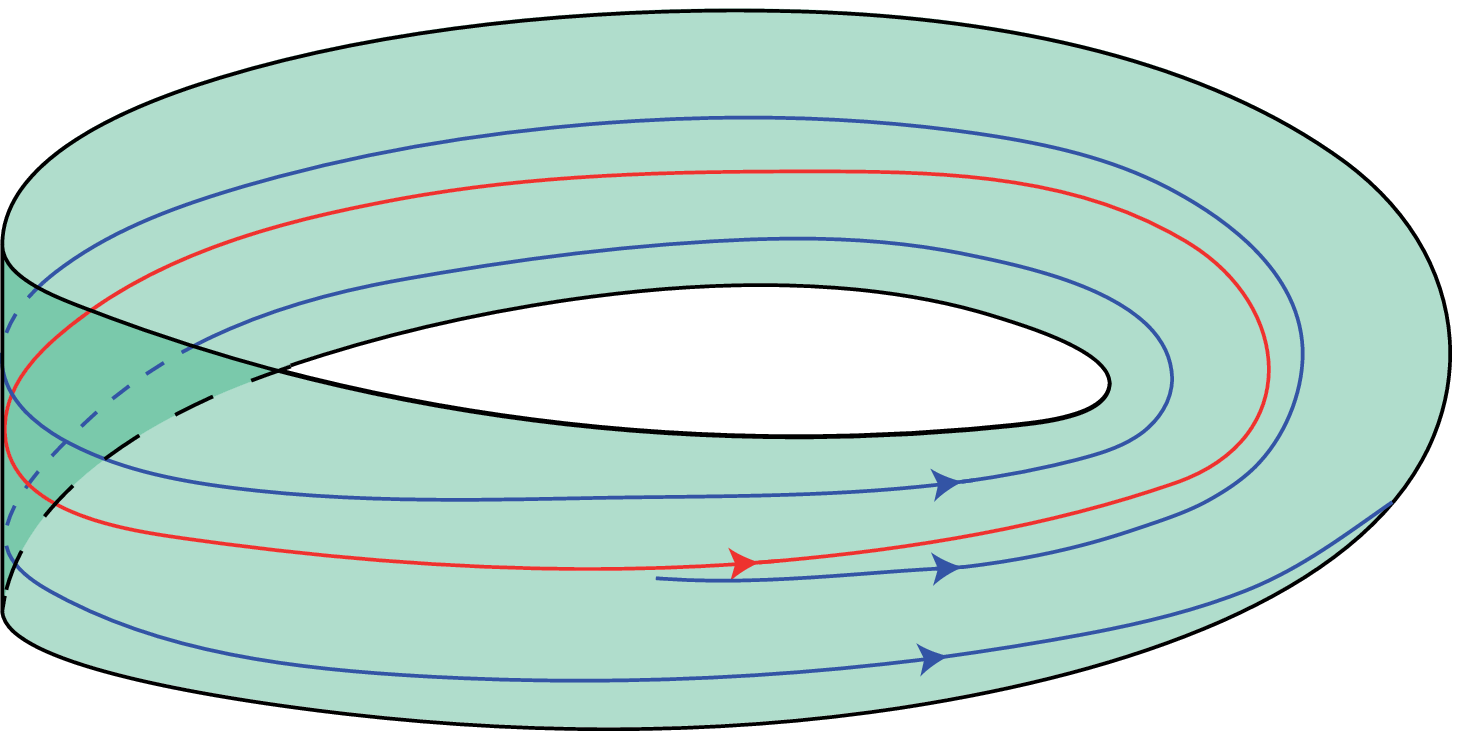}}
\\ 		Twisted orbit for $n=2$ 
	\end{minipage}
\end{figure}

A similar statement takes place  for the stable manifold $W^s_A=\{x\in S\mid f^t(x)\to A\,\, \text{as}\,\, t\to+\infty\}$ of the hyperbolic attracting orbit $A$ which states that a flow $b^{-t}_{\delta_A},\,\delta_A\in\{-,+\}$  is topologically equivalent to the flow $f^t_{W^s_A}$ by means a homeomorphism $h\colon W^s_A\to \Pi_{\delta_A}$.
\begin{figure}[h]
\centerline{\includegraphics[width=0.8\textwidth]{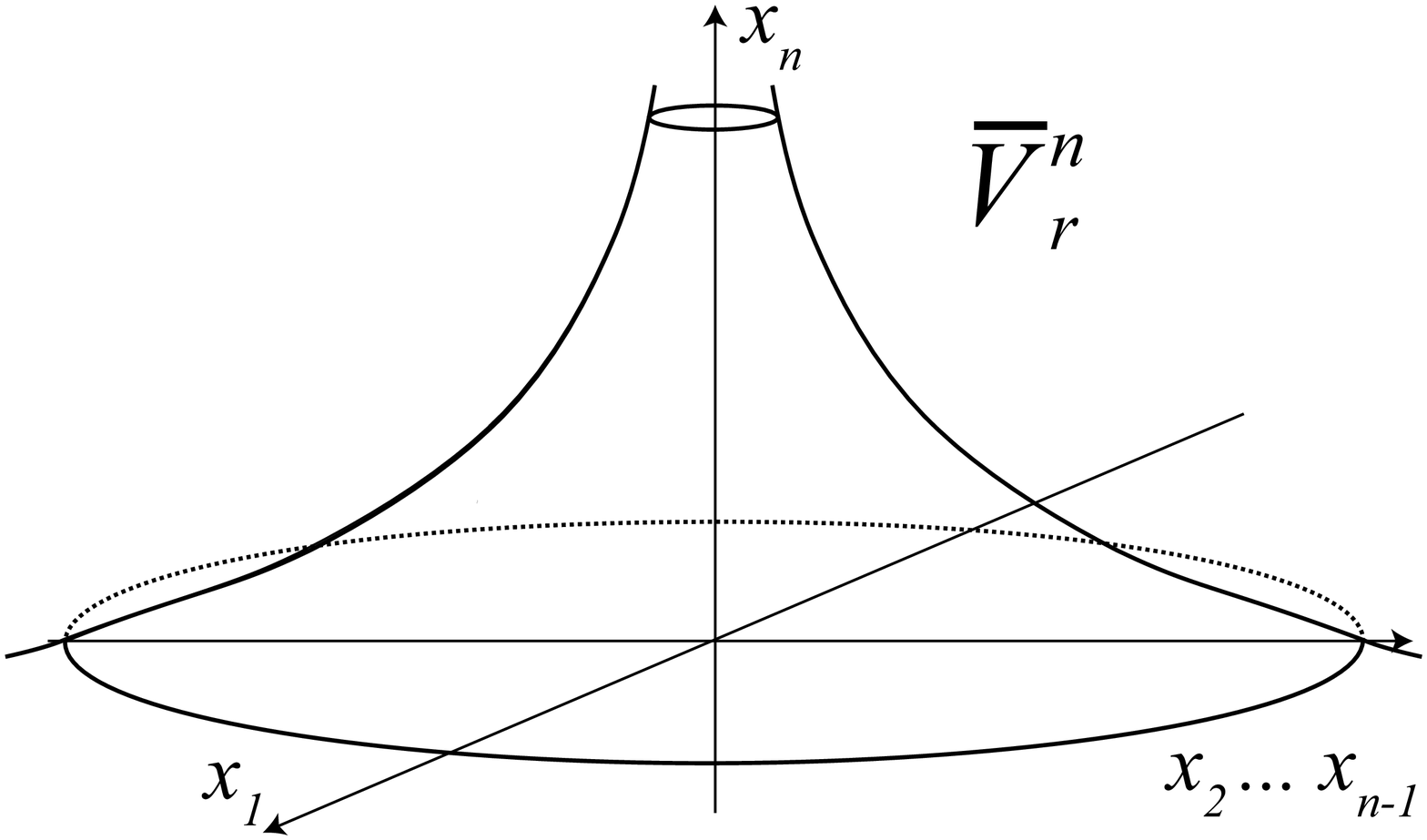}}
	\caption{The set $\bar{V}^n_r$}\label{Vnr0}
\end{figure}

For $r>0$ let (see Fig.~\ref{Vnr0}) $$\bar{V}^n_r= \left\{(x_1, \dots, x_n)\in \mathbb{R}^n \Big|\ x_1^2 + \dots + x_{n-1}^2 \leqslant r2^{-x_n} \right\},\quad \mathbb V^n_{\pm r}=v_\pm(\bar V^n_r).$$ 
By the construction, the quotient space $\mathbb V^n_{+r}$ is homeomorphic to the \emph{generalized solid torus} $\mathbb D^{n-1}\times \mathbb S^1$ and the quotient $\mathbb V^n_{-r}$ is homeomorphic to the  \emph{generalized solid Klein bottle} $\mathbb D^{n-1} \widetilde{\times} \mathbb S^1$. 

Let $\bar{V}^n=\bar{V}^n_1,\,
\mathbb V^n_\pm=v_\pm(\bar V^n),\,\mathbb L^n_\pm=v_\pm(Ox_n)$, where $Ox_i$ is the coordinate axis. Consider a homeomorphism $j\colon\partial \mathbb V^n_\pm\to \partial \mathbb V^n_\pm$, two copies 
$\mathbb V^n_\pm\times\mathbb Z_2,\,\mathbb Z_2=\{0,1\}$ of the manifold $\mathbb V^n_\pm$ and a  homeomorphism $J\colon\partial \mathbb V^n_\pm\times\{0\}\to \partial \mathbb V^n_\pm\times\{1\}$, defined by  $J(s,0)=(j(s),1)$.
Let $$M^n_{j}=\mathbb V^n_\pm\times\{0\}\cup_J\mathbb V^n_\pm\times\{1\}.$$
Denote the natural projection by $p_{j}\colon\mathbb V^n_\pm\times\mathbb Z_2\to M^n_{j}$ and let $f^t_{j}\colon M^n_{j}\to M^n_{j}$ be a topological flow defined by 
$$f^t_{j}(x)=\left\{
\begin{aligned}
	&p_j b^{t}_\pm(p_{j}|_{\mathbb V^n_\pm\times\{0\}})^{-1}(x),\,x\in  p_{j}|_{\mathbb V^n_\pm\times\{0\}},\,t\leqslant 0\\
	&p_j b^{-t}_\pm(p_{j}|_{\mathbb V^n_\pm\times\{1\}})^{-1}(x),\,x\in  p_{j}|_{\mathbb V^n_\pm\times\{1\}},\,t\geqslant 0.\\
\end{aligned}
\right.$$
\begin{figure}[h!]
\centerline{\includegraphics[width=0.8\textwidth]{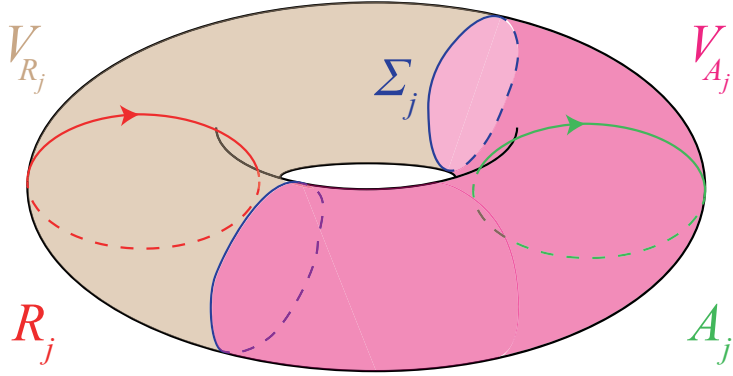}}
	\caption{Phase portrait of a model flow on the torus}\label{model_defi}
\end{figure} 

We call $f^t_j:M^n_j\to M^n_j$  \emph{$n$-dimensional model flows}. For any model flow let (see Fig.~\ref{model_defi})
$$R_{j}=p_{j}({\mathbb L^n_\pm\times\{0\}}),\,A_{j}=p_{j}({\mathbb L^n_\pm\times\{1\}}),$$
$$V_{R_{j}}=p_{j}({\mathbb V^n_\pm\times\{0\}}),\,V_{A_{j}}=p_{j}({\mathbb V^n_\pm\times\{1\}}),$$ 
$$\Sigma_{j}=p_{j}({\partial\mathbb V^n_\pm\times\{0\}})=p_{j}({\partial\mathbb V^n_\pm\times\{1\}}).$$

\begin{lemma}\label{redu}
Every flow $f^t\in G_2(M^n)$ is topologically equivalent to some model flow $f^t\colon M^n_j\to M^n_j$.
\end{lemma}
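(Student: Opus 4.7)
The plan is to realize $M^n$ as the union of two flow-invariant neighborhoods of $R$ and $A$ whose interiors are disjoint, to import the model structure on each piece via Irwin's theorem, and then to read off the gluing data from the flow-induced identification on the common boundary.

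First, I would apply Proposition~\ref{prop:irwin-eqv} to the repelling orbit $R$ to obtain a value $\delta_R\in\{-,+\}$ and a homeomorphism $h_R:\Pi_{\delta_R}\to W^u_R$ conjugating $b^t_{\delta_R}$ to $f^t|_{W^u_R}$; the analogous statement for the attractor $A$ yields $\delta_A$ and $h_A:\Pi_{\delta_A}\to W^s_A$ conjugating $b^{-t}_{\delta_A}$ to $f^t|_{W^s_A}$. Because $f^t$ is a Morse--Smale flow with only the orbits $R$ and $A$, every point of $M^n\setminus(A\sqcup R)$ has $\alpha$-limit $R$ and $\omega$-limit $A$, so $W^u_R=M^n\setminus A$ and $W^s_A=M^n\setminus R$. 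Set $V_R=h_R(\mathbb V^n_{\delta_R})$ and $V_A=h_A(\mathbb V^n_{\delta_A})$; these are compact generalized solid tori or Klein bottles carrying model dynamics near $R$ and $A$.

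Next I would show that one can arrange $V_R\cup V_A=M^n$ with $V_R\cap V_A=\partial V_R=\partial V_A$. The key observation is that each trajectory outside $A\cup R$ leaves $V_R$ in finite forward time and enters $V_A$ in finite forward time; by compactness of $\partial V_R$ and $\partial V_A$ and continuity of the flow, replacing $V_A$ by its saturation $f^{[-T,0]}(V_A)$ for large enough $T$ makes $V_A$ absorb the complement of $\mathrm{int}\,V_R$, after which one pushes $V_A$ back along the flow so that its boundary coincides with $\partial V_R$ as an embedded $(n-1)$-submanifold $\Sigma\subset M^n$. Since $\Sigma$ is simultaneously diffeomorphic to $\partial\mathbb V^n_{\delta_R}$ and to $\partial\mathbb V^n_{\delta_A}$, and the two boundaries are $\mathbb S^{n-2}\times\mathbb S^1$ and $\mathbb S^{n-2}\widetilde{\times}\mathbb S^1$ respectively, it follows that $\delta_R=\delta_A$; denote this common sign by $\delta$.

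Finally, I would define $j:\partial\mathbb V^n_\delta\to\partial\mathbb V^n_\delta$ by $j=(h_A|_{\partial\mathbb V^n_\delta})^{-1}\circ(h_R|_{\partial\mathbb V^n_\delta})$, and assemble the homeomorphism $H:M^n_j\to M^n$ by setting $H=h_R\circ(p_j|_{\mathbb V^n_\delta\times\{0\}})^{-1}$ on $V_{R_j}$ and $H=h_A\circ(p_j|_{\mathbb V^n_\delta\times\{1\}})^{-1}$ on $V_{A_j}$. By construction $H$ agrees on $\Sigma_j$ via the identification $J$, it sends orbits of $f^t_j$ to orbits of $f^t$ preserving time direction, and it is a homeomorphism by the gluing lemma. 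The main obstacle is the adjustment step in paragraph two: producing neighborhoods whose boundaries coincide as a single cross-section $\Sigma$ rather than merely as parallel copies, and doing so while retaining the flow-conjugating property of $h_R$ and $h_A$; this is where one must reparametrize $\mathbb V^n_\delta$ using the flow-invariant foliation of $\Pi_\delta\setminus\mathbb L^n_\delta$ by radius, so that the two model neighborhoods meet along a single flow transversal.
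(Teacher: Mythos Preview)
Your proposal is correct and follows essentially the same strategy as the paper: apply Irwin's linearization at $R$ and $A$, decompose $M^n$ into two generalized solid tori meeting along a common boundary, read off $j$ from the transition map, and glue the two conjugacies into $H$. The one place where the paper is a bit cleaner is precisely the step you flag as the main obstacle: rather than saturating $V_A$ and then pushing its boundary to match $\partial V_R$, the paper fixes $V_A=h_A(\mathbb V^n_\pm)$, chooses $r>0$ small enough that $V'_R=h_R(\mathbb V^n_{\pm r})$ is disjoint from $V_A$, \emph{defines} $V_R=M^n\setminus\mathrm{int}\,V_A$, and then extends $h_R|_{\mathbb V^n_{\pm r}}$ along the wandering trajectory segments to a conjugacy $H_R:\mathbb V^n_\pm\to V_R$; this forces $\partial V_R=\partial V_A$ automatically and avoids any separate adjustment. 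Your explicit verification that $\delta_R=\delta_A$ via the homeomorphism type of the common boundary is a point the paper leaves implicit.
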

\begin{proof} Let $f^t\in G_2(M^n)$ and $A,R$ be its the attracting and the repelling hyperbolic orbits respectively. Due to Proposition~\ref{prop:irwin-eqv} there is a homeomorphism $h_R\colon \Pi_\pm\to W^u_R$ which provides the topological equivalence of the flows $b^t_\pm$  and $f^t|_{W^u_R}$. Also, there is a homeomorphism $h_A\colon \Pi_\pm\to W^s_A$ which provides the topological equivalence of the flows $b^{-t}_\pm$ and $f^t|_{W^s_A}$. Let $V_A=h_A(\mathbb V^n_\pm)$ and $H_A=h_A|_{\mathbb V^n_\pm}$. We can choose $r>0$ such that a  neighbourhood $V'_R=h_R(\mathbb V^n_{\pm r})$ of $R$ is disjoint with $V_A$. Since the non-wandering set of $f^t$  consists of $A$ and $R$ then (see e.g. \cite{Sm})
	$$M^n=W^u_R\cup A=W^s_A\cup R$$ and consequently the set $M^n\setminus int(V_A \cup V'_R)$ consists of segments of wandering trajectories of the flow $f^t$, which have their boundary points on different connected components of the boundary $\partial M^n\setminus int(V_A \cup V'_R)$ (see Fig.~\ref{lemm1_proof}).
\begin{figure}[h]
\centerline{\includegraphics[width=0.8\textwidth]{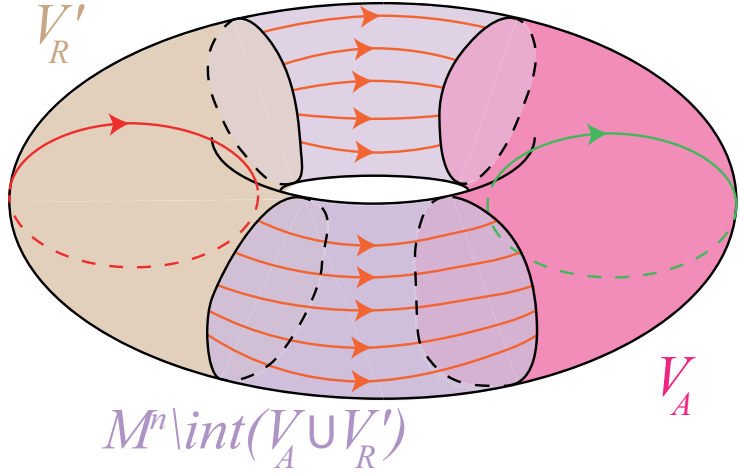}}
\caption{The neighbourhoods $V_A$ and $V'_R$ on $M^n$}\label{lemm1_proof}
\end{figure}
Let $V_R=M^n\setminus int\, V_A$. 
Then the homeomorphism $h_R|_{\mathbb V^n_{\pm r}}$ can be extended to the homeomorphism $H_R\colon\mathbb V^n_\pm\to V_R$ which provides the topological equivalence of the flows $b^t_\pm$ and $f^t$. 
Define a homeomorphism $j\colon\partial\mathbb V^n_\pm\to\partial\mathbb V^n_\pm$ by  $$j=H_A^{-1}H_R|_{\partial\mathbb V^n_\pm}.$$ 
Define a homeomorphism $H\colon M^n_{j}\to M^n$ by  $$H(x) = \begin{cases}
	H_A p_{j}^{-1}(x), & x\in V_{A_{{j}}}\\
	H_R p_{j}^{-1}(x), & x\in V_{R_{{j}}}	 
\end{cases}.$$ It is  directly verified that $H$ provides the  topological equivalence of the flows $f^t_j$ and $f^t$.
\end{proof}

Thus, the classification of the flows with the attractor-repeller dynamics can be reduced to the classification of the model flows.  Using methods of the previous proof it is easy to show that it is sufficient to consider some special class of homeomorphisms providing the topological  equivalence of the model flows.

\begin{lemma}\label{sig}
If the model flows $f^t_j\colon M^n_{j}\to M^n_{j}$ and $f^t_{j'}\colon M^n_{j'}\to M^n_{j'}$ are topologically equivalent, then there is a providing the topological  equivalence of these  flows homeomorphism $H\colon M^n_{j}\to M^n_{j'}$ such that $H(\Sigma_{j})=\Sigma_{j'}$.
\end{lemma}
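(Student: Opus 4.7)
My plan is to take the given topological equivalence $\tilde H \colon M^n_j \to M^n_{j'}$ and post-compose it with a flow-preserving self-homeomorphism $\Psi \colon M^n_{j'} \to M^n_{j'}$ that slides the image $\tilde H(\Sigma_j)$ along orbits of $f^t_{j'}$ onto $\Sigma_{j'}$. Since a topological equivalence must send the attractor to the attractor and the repeller to the repeller, $\tilde H(A_j)=A_{j'}$ and $\tilde H(R_j)=R_{j'}$, so $\Sigma := \tilde H(\Sigma_j)$ is a compact subset of $W := M^n_{j'}\setminus(A_{j'}\cup R_{j'})$, and because $\Sigma_j$ meets every wandering orbit of $f^t_j$ exactly once, $\Sigma$ meets every wandering orbit of $f^t_{j'}$ exactly once.

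The next step is to introduce flow coordinates on $W$. Every wandering orbit is a copy of $\mathbb R$ crossing $\Sigma_{j'}$ in exactly one point, so the map
\[
\Phi\colon\Sigma_{j'}\times\mathbb R\to W,\qquad (y,u)\mapsto f^u_{j'}(y),
\]
is a homeomorphism. Under this identification $\Sigma_{j'}=\Sigma_{j'}\times\{0\}$, and $\Sigma$ becomes the graph of a continuous function $\tau\colon\Sigma_{j'}\to\mathbb R$ characterised by $f^{\tau(y)}_{j'}(y)\in\Sigma$. Compactness of $\Sigma_{j'}$ yields a finite bound $T:=\max_{y}|\tau(y)|$ which I assume positive (otherwise already $\Sigma=\Sigma_{j'}$).

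I will then choose a continuous cutoff $\lambda\colon\mathbb R\to[0,1]$ that is identically $1$ on $[-T,T]$, vanishes outside $[-3T,3T]$, and satisfies $|\lambda'|\leqslant\tfrac{1}{2T}$ wherever it is differentiable (a piecewise linear choice works), and define
\[
\Psi\bigl(f^u_{j'}(y)\bigr)=f^{u-\lambda(u)\tau(y)}_{j'}(y),\quad (y,u)\in\Sigma_{j'}\times\mathbb R,\qquad \Psi|_{A_{j'}\cup R_{j'}}=\mathrm{id}.
\]
For each fixed $y$ the map $u\mapsto u-\lambda(u)\tau(y)$ has slope at least $1-\tfrac{1}{2T}\cdot T=\tfrac{1}{2}$ wherever differentiable, so it is a strictly increasing homeomorphism of $\mathbb R$. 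Because $\lambda$ vanishes outside $[-3T,3T]$, $\Psi$ coincides with the identity on the open neighbourhoods $\Phi(\Sigma_{j'}\times(3T,\infty))\cup A_{j'}$ and $\Phi(\Sigma_{j'}\times(-\infty,-3T))\cup R_{j'}$ of $A_{j'}$ and $R_{j'}$, so the extension of $\Psi$ across these periodic orbits is continuous. Thus $\Psi$ is a self-homeomorphism of $M^n_{j'}$ preserving each orbit with its orientation; and $\Psi(\Sigma)=\Sigma_{j'}$ by construction, since $\Psi\bigl(f^{\tau(y)}_{j'}(y)\bigr)=f^{\tau(y)-\tau(y)}_{j'}(y)=y$. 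The composition $H:=\Psi\circ\tilde H$ will be the desired topological equivalence satisfying $H(\Sigma_j)=\Sigma_{j'}$.

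The hard part will be verifying continuity of $\Psi$ at $A_{j'}$ and $R_{j'}$, where the flow coordinate $u$ diverges and the naive unweighted shift $u\mapsto u-\tau(y)$ would fail to extend (it would induce a $y$-dependent ``rotation'' on the limiting periodic orbits). The compactly supported cutoff $\lambda$ resolves this: it forces $\Psi$ to be literally the identity on open neighbourhoods of $A_{j'}$ and $R_{j'}$, bypassing the degeneracy of the flow-coordinate parameterisation near these orbits.
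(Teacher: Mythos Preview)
Your argument is correct. The paper does not actually prove this lemma; it only remarks that ``using methods of the previous proof it is easy to show'' the statement, referring to the trajectory-extension technique of Lemma~\ref{redu}. Your construction is a clean, explicit instantiation of exactly that idea: rather than redoing the local-chart construction of Lemma~\ref{redu} with $\tilde H(V_{A_j})$ in place of $h_A(\mathbb V^n_\pm)$, you introduce global flow coordinates $\Phi\colon\Sigma_{j'}\times\mathbb R\to W$ on the wandering set and slide $\tilde H(\Sigma_j)$ onto $\Sigma_{j'}$ with a compactly supported time-shift. The compact-support cutoff $\lambda$ is the right device to ensure the resulting $\Psi$ extends by the identity over neighbourhoods of the periodic orbits, and the Lipschitz bound on $\lambda$ guarantees monotonicity fibrewise. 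The two approaches are essentially the same ``move along orbits'' argument; yours is slightly more elementary in that it uses only the global cross-section property of $\Sigma_{j'}$ and does not invoke the Irwin local model again.
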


\section{A criteria for the model flows topological equivalence}\label{Crc}

In this section we give a criteria of the model flows topological equivalence, from which the complete description of equivalence classes in $G_2(M^n)$ follows. 

Denote  by $\bar \alpha$ the connected component of the set $\partial \bar V^n\cap Ox_2x_n$ containing the point $(0, 1, \dots, 0)$. We consider $\bar \alpha$ as the curve oriented in the direction of the increasing $x_n$ coordinate. Let $\bar \beta=\partial \bar V^n\cap Ox_1\dots x_{n-1}$ (see Fig.~\ref{V_a_b_n}) and $$\alpha_\pm = v_\pm(\bar \alpha), \beta_\pm = v_\pm(\bar \beta).$$
\begin{figure}[h]
\centerline{\includegraphics[width=0.8\textwidth]{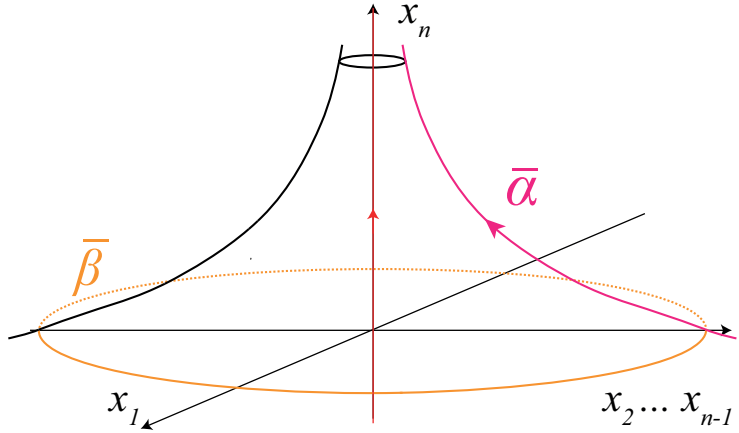}}
	\caption{$\bar \alpha$ and $\bar \beta$ on $\bar V^n$}\label{V_a_b_n}
\end{figure}

Denote by $i\colon \partial \mathbb V^n_\pm \to \mathbb V^n_\pm$ the inclusion map and by $i_*\colon \pi_1(\partial \mathbb V^n_\pm)\to \pi_1(\mathbb V^n_\pm)$ the induce isomorphism. Since the group ${\langle g_{\pm}\rangle}$ is isomorphic to $\mathbb Z$ and acts freely and discontinuously on the simply connected space $\bar V^n$, the fundamental group $\pi_1(\mathbb V^n_\pm)$ is also isomorphic to the group $\mathbb Z$ (see e.g.~\cite{Kosn}) and $\alpha_\pm$ is its generator\footnote{The space $\partial \mathbb V^2_+$ consists of two connected components. In this case $i_*$ is a map composed of the induced isomorphisms for each connected component.}.
\begin{theorem}[Criteria for topological equivalence]
Two model flows $f^t_j\colon M^n_j\to M^n_j, f^t_{j'}\colon M^n_{j'}\to M^n_{j'}$ are topologically equivalent if and only if  there is a homeomorphism $h_0\colon  \partial \mathbb V^n_\pm \to \partial \mathbb V^n_\pm$ such that
	\begin{equation}\label{i_0}
		i_*h_{0*} = i_*
	\end{equation}
	and the homeomorphism $h_1 = j' h_0j^{-1}$ possesses the property
	\begin{equation}\label{+}
		i_*h_{1*} = i_*
	\end{equation}\label{criteria}
\end{theorem}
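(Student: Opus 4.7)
The plan is to prove each direction separately; the sufficiency direction is the substantial part.

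\textbf{Necessity.} Suppose the two model flows are topologically equivalent. By Lemma~\ref{sig} there is an equivalence $H\colon M^n_{j}\to M^n_{j'}$ with $H(\Sigma_j)=\Sigma_{j'}$. Since $H$ preserves orbit orientations, it sends the attractor to the attractor and the repeller to the repeller, so $H(V_{R_j})=V_{R_{j'}}$ and $H(V_{A_j})=V_{A_{j'}}$. Pulling back via the projections $p_j,p_{j'}$, I obtain homeomorphisms $\tilde H_R,\tilde H_A\colon \mathbb V^n_{\pm}\to\mathbb V^n_{\pm}$ and I set $h_0=\tilde H_R|_{\partial\mathbb V^n_\pm}$. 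Because $\tilde H_R$ carries the core orbit $\mathbb L^n_\pm$ to itself with the flow direction preserved, the induced map $\tilde H_{R*}$ on $\pi_1(\mathbb V^n_\pm)\cong\mathbb Z$ fixes the generator $\alpha_\pm$. Therefore $\tilde H_{R*}=\mathrm{id}$ and $i_*h_{0*}=\tilde H_{R*}i_*=i_*$, which is (\ref{i_0}). The two parameterizations on $\Sigma_j$ are related by $J(s,0)=(j(s),1)$ (and analogously on the target), so comparing the two boundary restrictions of $H$ gives $\tilde H_A|_{\partial \mathbb V^n_\pm}\circ j=j'\circ h_0$, hence $\tilde H_A|_{\partial \mathbb V^n_\pm}=j'h_0j^{-1}=h_1$; applying the same $\pi_1$-argument to $\tilde H_A$ yields (\ref{+}).

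\textbf{Sufficiency.} Given $h_0$ and $h_1=j'h_0j^{-1}$ satisfying (\ref{i_0}) and (\ref{+}), the aim is to extend $h_0$ to a homeomorphism $H_R\colon \mathbb V^n_\pm\to\mathbb V^n_\pm$ realizing a self-equivalence of $b^t_\pm$, extend $h_1$ analogously to $H_A$ (a self-equivalence of $b^{-t}_\pm$), and glue. I pass to the universal cover $\bar V^n$. The hypothesis $i_*h_{0*}=i_*$ guarantees that $h_0$ lifts to $\bar h_0\colon \partial\bar V^n\to\partial\bar V^n$ commuting with the covering translation $g_\pm$. Every point of $\bar V^n\setminus Ox_n$ has a unique representation $\bar b^\tau(q)$ with $q\in\partial\bar V^n$ and $\tau\in\mathbb R$; setting $\bar H_R(\bar b^\tau(q))=\bar b^\tau(\bar h_0(q))$ and extending by the identity on $Ox_n$ produces a $g_\pm$-equivariant homeomorphism sending $\bar b^t$-orbits to $\bar b^t$-orbits with matching time direction. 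This descends to the required $H_R$. Building $H_A$ from $h_1$ in the same way and using $h_1=j'h_0j^{-1}$ to verify compatibility on $\Sigma_j$, I glue $H_R$ on $V_{R_j}$ with $H_A$ on $V_{A_j}$ (via $p_j,p_{j'}$) into a well-defined homeomorphism $H\colon M^n_j\to M^n_{j'}$ providing the equivalence.

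\textbf{Main obstacle.} The delicate step is the flow-preserving extension across the core orbit $\mathbb L^n_\pm$. Off the core, the product structure supplied by the flow makes extension immediate; continuity and bijectivity at the core require the lift $\bar h_0$ to be equivariant with the correct sign for the covering translation, which is precisely the $\pi_1$-condition (\ref{i_0}). In the twisted case one must additionally verify that $\bar H_R$ commutes with $g_-$, which involves the sign flip in the first coordinate and must be respected by $\bar h_0$; in the case $n=2$ the boundary $\partial\mathbb V^2_+$ has two components, so the condition and the extension are to be read componentwise. These are bookkeeping points, and the $\pi_1$ hypothesis captures the entire obstruction to the extension.
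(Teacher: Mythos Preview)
Your necessity argument is essentially the paper's; that direction is fine. The sufficiency argument, however, has a real gap at exactly the point you flag as the ``main obstacle'', and the $\pi_1$-condition alone does \emph{not} resolve it.

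You define $\bar H_R$ off the axis by $\bar H_R(\bar b^\tau(q))=\bar b^\tau(\bar h_0(q))$ and extend by the identity on $Ox_n$. Write $\bar h_0(q)=(\phi(q),\psi(q))$ and set $\delta(q)=\psi(q)-x_n(q)$; then for $(y,t)\in\bar V^n\setminus Ox_n$ one has $\bar H_R(y,t)=(\phi(q),\,t+\delta(q))$, where $q=q(y)$ is the unique boundary point on the $\bar b^t$-orbit of $(y,t)$. The equivariance of $\bar h_0$ (which \emph{is} what (\ref{i_0}) buys you) makes $\delta$ descend to a continuous function on the compact quotient $\partial\mathbb V^n_\pm$, hence bounded---but there is no reason for $\delta$ to be constant. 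Choosing sequences $(y_k,t_k)\to(0,t_0)$ along which $\delta(q(y_k))$ accumulates on different values shows that $\bar H_R$ has no continuous extension to $Ox_n$ (identity or otherwise) unless $\delta$ is constant. A generic $h_0$ satisfying $i_*h_{0*}=i_*$ produces a nonconstant $\delta$, so your $\bar H_R$ is not a homeomorphism of $\bar V^n$.

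The paper's proof circumvents this by \emph{not} insisting on a flow-conjugacy. It projects $\bar h_0$ to a homeomorphism $\bar w$ of the hyperplane $Oy$ (which does extend continuously over the origin), then builds a new image disk $\bar B'$ using the annulus conjecture, and finally constructs the extension on a fundamental domain by an affine reparametrisation of each vertical segment. The resulting $\bar H_0$ sends $\bar b^t$-orbits to $\bar b^t$-orbits with correct orientation but does \emph{not} satisfy $\bar H_0\bar b^t=\bar b^t\bar H_0$; this is exactly the freedom needed to absorb the nonconstant $\delta$ and still get continuity at the core. Your argument can be repaired along these lines, but as written the extension step fails.
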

\begin{proof} 
\textit{Necessity.} Let the flows $f^t_j$ and $f^t_{j'}$ be topologically equivalent by means of a homeomorphism $H\colon M^n_j\to M^n_{j'}$. By the Lemma~\ref{sig} without loss of generality we assume that $H(\Sigma_{j})=\Sigma_{j'}$.
Define a homeomorphism $H_k\colon \mathbb V^n_\pm \to \mathbb V^n_\pm, k\in \mathbb Z_2$ by  $(H_k(s), k) = p^{-1}_{j'} H p_j\big|_{\mathbb V^n_\pm \times \{k\}}(s, k)\colon \mathbb V^n_\pm\times \{k\} \to \mathbb V^n_\pm\times \{k\}$. Let $h_k = H_ k|_{\partial\mathbb V^n_\pm}$.
	
Notice that the curves $\mathbb L^n_\pm$ and $\alpha_\pm$ are homotopic in $\mathbb V^n_\pm$ as they bound a two-dimensional annulus $v_\pm(\bar V^n\cap Ox_2x_n)$ in $\mathbb V^n_\pm$.
As $H$ provides the  topological equivalence of the flows $f^t_j$ and $f^t_{j'}$ then $H_0(R_j) = R_{j'}$, that implies $H_{0*}([R_j]) = [R_{j'}]$. Considering the fact that $\pi_1(\mathbb V^n_\pm) \cong \langle \alpha_\pm \rangle$ we can deduce that $H_{0*} = id$ which implies equality (\ref{i_0}) $i_*h_{0*} = i_*$.
	
It follows from the definition of the model flow that	$h_1 = j' h_0j^{-1}$. The equality  (\ref{+}) $i_*h_{1*} = i_*$ for the map $h_1$ can be proved as above.
	
\textit{Sufficiency.} Assume that  there is a homeomorphism $h_0\colon  \partial \mathbb V^n_\pm \to \partial \mathbb V^n_\pm$ such that $i_*h_{0*} = i_*$ and the homeomorphism $h_1 = j' h_0j^{-1}$ satisfies $i_*h_{1*} = i_*$. Since $i_* h_{0*} = i_*$ and due to \cite{begin}[Proposition 10.2.26], homeomorphism $h_0$ admits a lift $\bar h_0\colon \partial V^n\to \partial \bar V^n$, which commutes with $g_\pm$.
	Let $\bar \beta' = \bar h_0(\bar \beta)$. Choose a positive integer $n_0$ such that $\bar \beta'\subset\{(x_1,\dots x_{n-1}, x_n)\in\mathbb R^n: 0<x_n<n_0\}$ (see Fig.~\ref{meri_n}). Let us extend the homeomorphism $\bar h_0$ to a homeomorphism $\bar H_0\colon \bar V^n\to \bar V^n$ commuting with $g_\pm:\mathbb R^n\to\mathbb R^n$ and providing the topological  equivalence of the flow $\bar b^t_\pm$ with itself. 
	
Recall that $g_\pm(x, r)=(a_{\pm}(x), r - 1).$ Let $y=(x_1,\dots,x_{n-1})$ and define a map $p_0\colon\mathbb R^n\to Oy$ by  $$p_0(y, x_n) = y.$$  By the construction $p_0|_{\partial{\bar V}^n_r}:\partial{\bar V}^n_r\to Oy\setminus O$ (hyperplane $x_n = 0$ without initial) is a diffeomorphism for any $r>0$.
	Define the homeomorphism $\bar w\colon Oy\setminus O\to Oy\setminus O$ by  $$\bar w = p_0\bar h_0({p_0}|_{\partial \bar V^n_1})^{-1}.$$ 
	Since $i_*h_{0*} = i_*$ the homeomorphism $\bar w$ can be continuously extended to the point $O$ by  $\bar w(O)=O$. Assume that $y'=\bar w(y),\,y\in Oy$, $\bar \beta'_0 = p_0(\bar \beta')$ and $\bar \beta_0=p_0(g_\pm^{-n_0}(\bar\beta))$.
	Let $\bar B,\,\bar B_0,\,\bar B'_0$ denote the disks in $Oy$ which are bounded by the spheres $\bar \beta,\,\bar \beta_0,\,\bar \beta'_0$ respectively.
	Thus, $\bar w(\bar B)=\bar B'_0$.
	
By virtue of the annulus conjecture and the fact that spheres $\bar\beta_0$ and $\bar\beta'_0$ are disjoint and are cylindrically embedded they bound an annulus $K_0\subset Oy$ (see Fig.~\cite{begin}). Let $\tau\colon\mathbb S^{n-2}\times[0,1]\to K_0$ be a homeomorphism such that $\tau(\mathbb S^{n-2}\times\{0\})=\bar \beta_0$ and $\tau(\mathbb S^{n-2}\times\{1\})=\bar \beta'_0$. For $t\in[0,1]$ put $c_t=\tau(\mathbb S^{n-2}\times\{t\})$, $r_t=t+2^{-n_0}(1-t)$ and $C_t=(p_0|_{\partial{\bar V}^3_{r_t}})^{-1}(c_t)$. Let us define a disk $\bar B'$ by  $$\bar B'=\bigcup\limits_{t\in[0,1]}C_t\cup\bar B_0.$$
	\begin{figure}
		\center\includegraphics[width=0.9\textwidth]{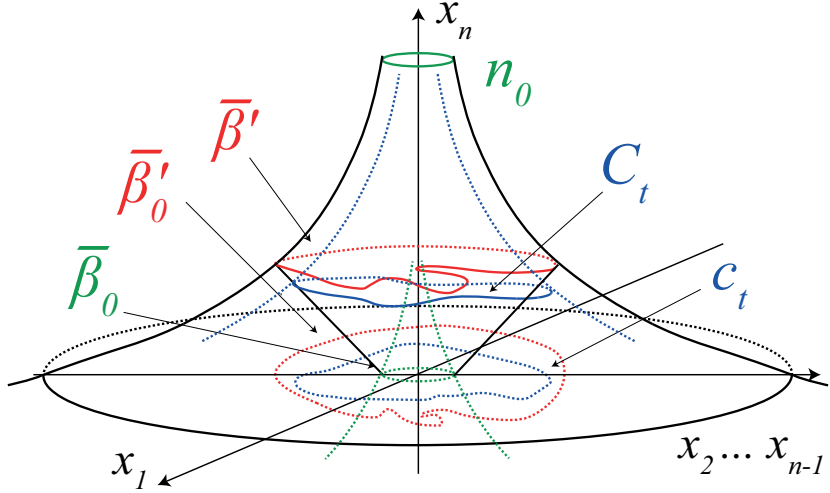}
		\caption{Construction of the disk $B'$}\label{meri_n}
	\end{figure}
	
	Since $i_*h_{0*} = i_*$, the homeomorphism $\bar h_0$ sends the part $\bar\Gamma$ of the cylinder $\partial \bar V^n$ lying between spheres $\bar\beta$, $g^{-1}_\pm(\bar\beta)$ to the part $\bar\Gamma'$ of cylinder $\partial \bar V^n$ lying between the spheres $\bar\beta'$, $g^{-1}_\pm(\bar\beta')$. Denote by $\bar W$ ($\bar W'$) a compact subset of $\bar V^n$ bounded by $\bar B$, $g^{-1}_\pm(\bar B)$ and $\bar\Gamma$ ($\bar B'$, $g^{-1}_\pm(\bar B')$ and $\bar\Gamma'$). 
	
For every $y_0\in Oy$ let $L_{y_0}=\{(y,x_n)\in\mathbb R^n:y=y_0\}$. For $(y_0,0)\in\bar B$ let $I_{y_0}=L_{y_0}\cap\bar W$ and $I'_{y'_0}=L_{y'_0}\cap\bar W'$. 
Denote boundary points of segments $I_{y_0}$ and $I'_{y'_0}$ by $A_{y_0},B_{y_0}$ and $A'_{y'_0},B'_{y'_0}$ where $A_{y_0}=(y_0,0),\,B_{y_0}=(y_0,b_{y_0})$ and $A'_{y'_0}=(y'_0,a'_{y'_0}),\,B'_{y'_0}=(y'_0,b'_{y'_0}),\,a'_{y'_0}\leqslant b'_{y'_0}$. 
	Define a homeomorphism $\bar h_{y_0}:I_{y_0}\to I'_{y'_0}$ by  
	$$\bar h_{y_0}(y_0,x_n) = \left(y'_0, x_n\dfrac{b'_{y'_0} - a'_{y'_0}}{b_{y_0}} + a'_{y'_0}\right).$$
By virtue of the fact that $\bar W=\bigcup\limits_{y_0\in\bar B}I_{y_0}$ we get a homeomorphism $h_{\bar W}:\bar W\to\bar W'$, composed of $\bar h_{y_0}$, which provides the topological equivalence  of the flow $\bar b^t_\pm|_{\bar W}$ with  $b^t_\pm|_{\bar W'}$.
	Extend $h_{\bar W}$ to $\bar V^n$ by  
	$$\bar H_0(x_1,\dots x_{n-1}, x_n) = g^{-[x_n]}(h_{\bar W}(g^{[x_n]}(x_1,\dots x_{n-1}, x_n))),$$
	where $[x]$ denotes the integer part of the number $x$. 
By the construction $\bar H_0 g_\pm = g_\pm \bar H_0$. By virtue of \cite{begin} this fact implies, that $H_0 = v^{-1}_\pm \bar H_0 v_\pm$ is a homeomorphism and the following equality holds $H_0 b^t = b^t H_0.$ 
	
By the same way the homeomorphism $h_1$ can be extended to a homeomorphism $H_1\colon \mathbb{V}^n_\pm\to \mathbb{V}^n_\pm$ commuting with $g_\pm$ and providing the topological  equivalence of the flow $\bar b^{-t}_\pm$ with itself. Thus, the requirement homeomorphism 
	$H\colon M^n_j\to M^n_{j'}$ is defined by 
	$$H(x) = \begin{cases}
		p_{j'} H_0 p_{j}^{-1}(x), & x\in p_j(\mathbb{V}^n_\pm\times\{ 0\}) \\
		p_{j'} H_1 p_{j}^{-1}(x), & x\in p_j(\mathbb{V}^n_\pm\times\{ 1\})
	\end{cases}.$$
\end{proof}

\section{Classification of surface  model flows}\label{se:G2M2}

In this section we prove  Theorem~\ref{G2M2}. 

Let $f^t_j\colon M^2_{j}\to M^2_{j}$ be a two-dimensional model flow. Then the ambient surface $M^2_{j}$ has the form $M^2_{j}=\mathbb V^2_\pm\times\{0\}\cup_J\mathbb V^2_\pm\times\{1\}$, where $J\colon \partial\mathbb V^2_\pm\times\{0\}\to\partial\mathbb V^2_\pm\times\{1\}$ is a homeomorphism defined as $J(s,0)=(j(s),1)$ for some homeomorphism $j\colon \partial \mathbb V^2_\pm\to \partial \mathbb V^2_\pm$. 

If the periodic orbit is untwisted then its tubular neighbourhood is an annulus and its boundary has two connected components each of them is homeomorphic to the circle. If the periodic orbit is  twisted then its tubular neighbourhood  is a M\"obius band and its boundary is homeomorphic to the circle. Let $\mathbb S^1=\{e^{i\varphi},\,\varphi\in\mathbb R\}$, $\mathbb S^0=\{-1,+1\}$. Define the following  diffeomorphisms on the manifolds  $\partial\mathbb V^2_+\cong\mathbb S^1\times\mathbb S^0$:

1. $j_1(\varphi,\pm 1)=(\varphi,\pm 1)$;

2. $j_2(\varphi,\pm 1)=(-\varphi,\pm 1)$;

3. $j_3(\varphi,-1)=(-\varphi,-1),\,j_3(\varphi,+1)=(\varphi,+1)$.

Define the following diffeomorphisms on the manifold $\partial\mathbb V^2_-\cong\mathbb S^1$:

4. $j_4(\varphi)=\varphi$;

5. $j_5(\varphi)=-\varphi$.

Pictures \ref{pictorus_2}, \ref{phaseG2M2} provide the phase portraits of the model flows corresponding to the given maps. The sign ``$+$'' means the gluing with the map $\varphi$ and the sign ``$-$'' with the map $-\varphi$.
\begin{figure}[h!]
	\begin{center}
		\begin{minipage}[h]{0.32\linewidth}
\center{\includegraphics[width=1\linewidth]{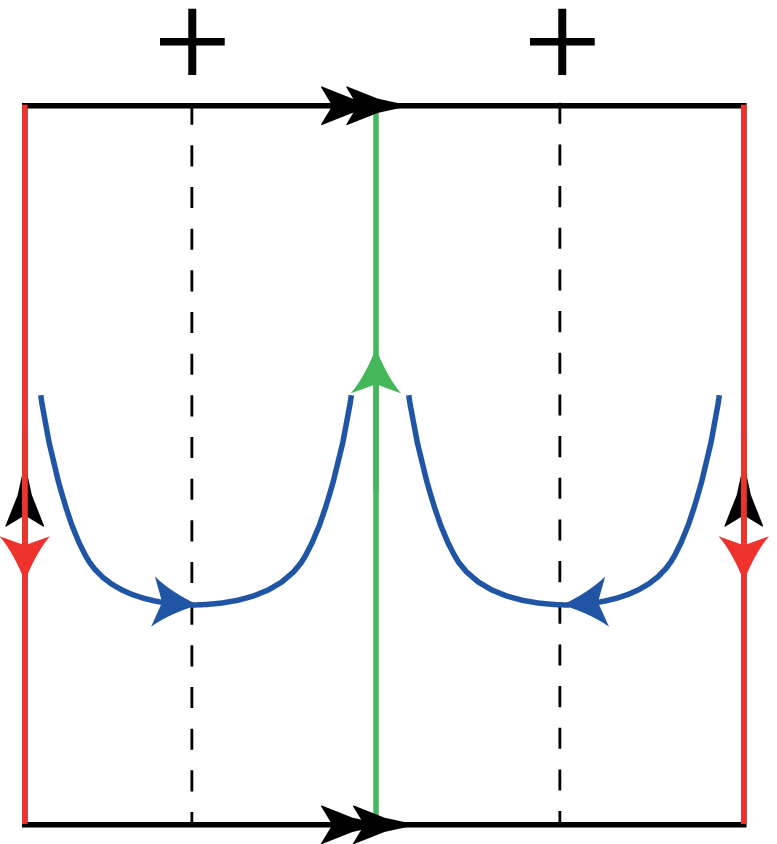} $f^t_{j_1}$}
		\end{minipage}
		\begin{minipage}[h]{0.32\linewidth}
\center{\includegraphics[width=1\linewidth]{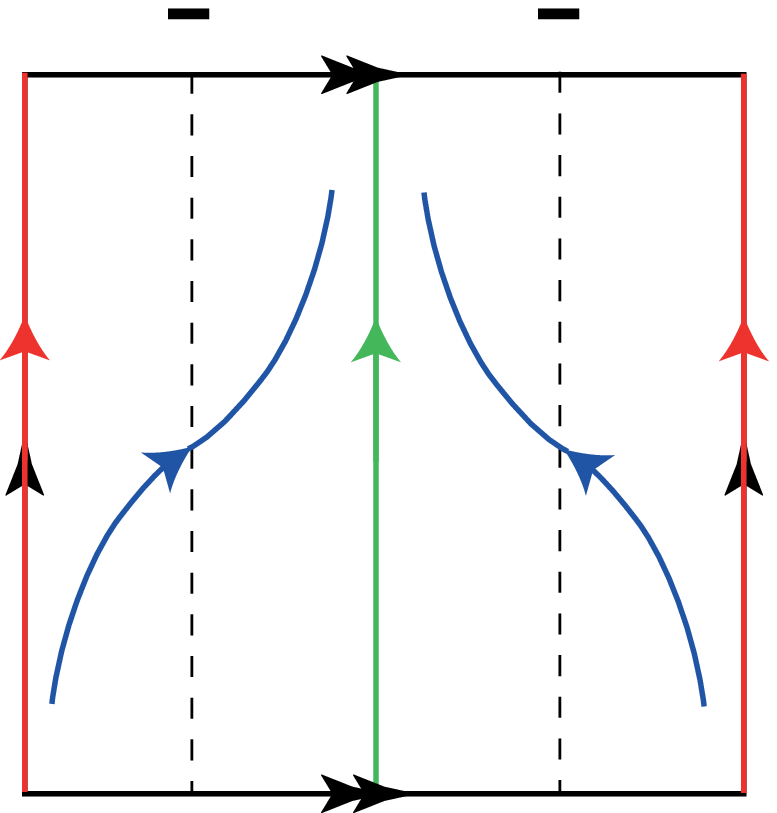} $f^t_{j_2}$}
		\end{minipage}
	\end{center}
	\caption{Flows on the torus}\label{pictorus_2}
	\begin{minipage}[h]{0.32\linewidth}
\center{\includegraphics[width=1\linewidth]{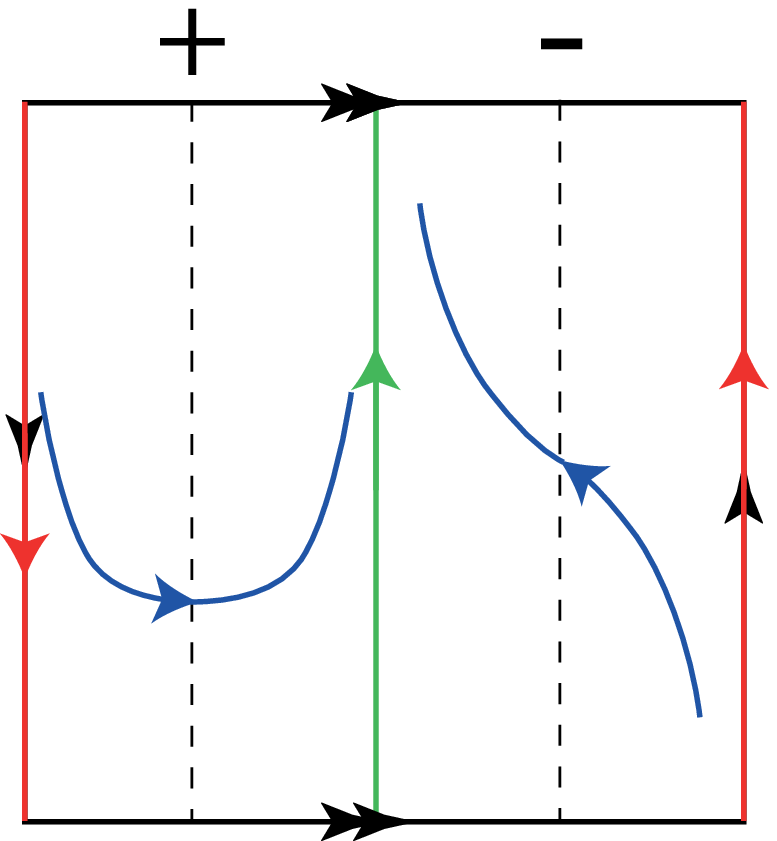} $f^t_{j_3}$}
	\end{minipage}
		\begin{minipage}[h]{0.32\linewidth}
\center{\includegraphics[width=1\linewidth]{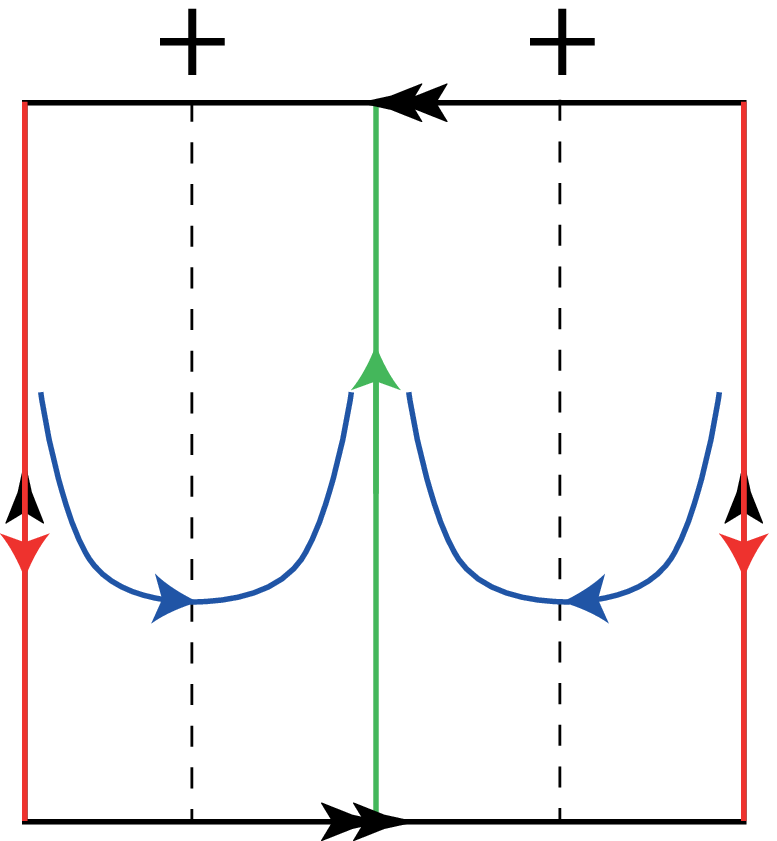} $f^t_{j_4}$}
	\end{minipage}
	\begin{minipage}[h]{0.32\linewidth}
\center{\includegraphics[width=1\linewidth]{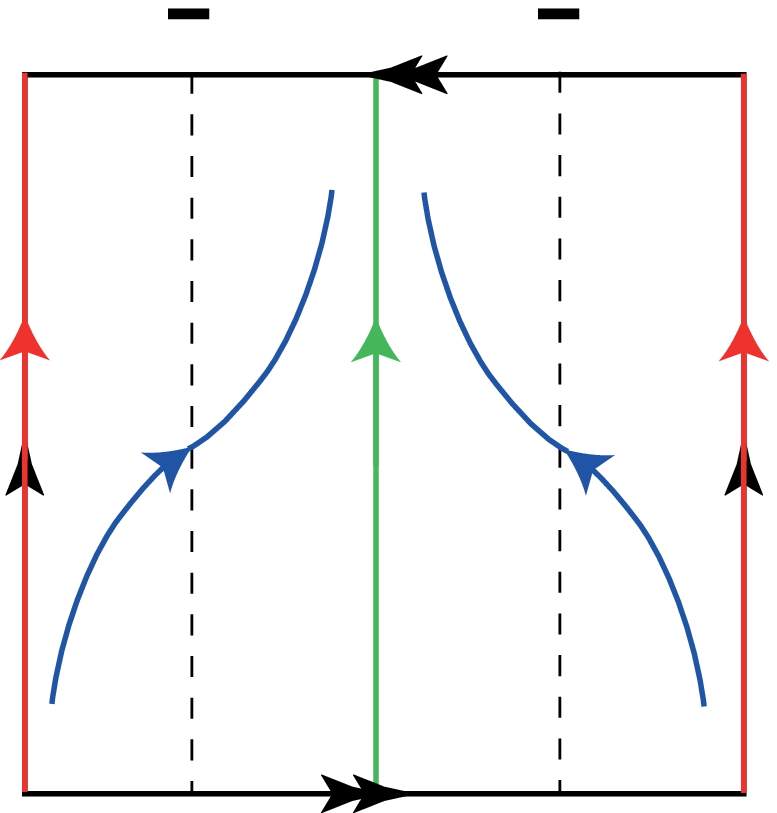} $f^t_{j_5}$}
	\end{minipage}
	\caption{Flows on the Klein bottle}\label{phaseG2M2}
\end{figure}

\begin{lemma}\label{surf_not_eqv} $ $
\begin{enumerate}
\item Every model flow $f^t_{j}:M^2_+\to M^2_+$ is topologically equivalent either to  $f^t_{j_1}$ or to $f^t_{j_2}$, herewith $f^t_{j_1}$ and $f^t_{j_2}$  are non topologically
equivalent to each other.
\item Every model flow $f^t_{j}:M^2_-\to M^2_-$ is topologically equivalent either to $f^t_{j_3}$ or to $f^t_{j_4}$, or to $f^t_{j_5}$, herewith $f^t_{j_3}$, $f^t_{j_4}$ and  $f^t_{j_5}$ are pairwise non topologically equivalent.
\end{enumerate}
\end{lemma}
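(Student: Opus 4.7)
The plan is to combine the topological-equivalence criterion of Theorem~\ref{criteria} with a careful enumeration of isotopy classes of boundary homeomorphisms. Since $\mathbb V^2_+$ is an annulus and $\mathbb V^2_-$ is a M\"obius band, self-homeomorphisms of $\partial\mathbb V^2_+\cong \mathbb S^1\sqcup\mathbb S^1$ are determined up to isotopy by a permutation of the two components together with a $\pm 1$ orientation on each component, while self-homeomorphisms of $\partial\mathbb V^2_-\cong\mathbb S^1$ are determined up to isotopy by a degree $\pm 1$. Hence there are only finitely many gluings $j$ to examine.

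Next I would apply the criterion. The inclusion-induced map $i_*\colon\pi_1(\partial\mathbb V^2_\pm)\to\pi_1(\mathbb V^2_\pm)\cong\mathbb Z$ sends each component's generator to $\pm\alpha_\pm$, so the condition $i_*h_{0*}=i_*$ forces any admissible $h_0$ to have degree $+1$ on every component, although on $\partial\mathbb V^2_+$ it may swap the two components; the same holds for $h_1 = j' h_0 j^{-1}$. Reading the criterion as a double-coset equivalence on the finite set of isotopy classes of $j$, I would check directly that for $\partial\mathbb V^2_+$ the class where both components are orientation-preserved reduces to $j_1$, the class where both are reversed reduces to $j_2$, and the mixed-orientation class (its two sub-cases are conjugated by a component-swap) reduces to $j_3$; and for $\partial\mathbb V^2_-$ the two degrees give precisely $j_4$ and $j_5$. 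A parity check on the gluing orientations confirms that $j_1,j_2$ produce the torus while $j_3,j_4,j_5$ produce the Klein bottle, in agreement with Figures~\ref{pictorus_2} and \ref{phaseG2M2}.

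Finally I need to rule out hidden equivalences between the canonical forms. The topological invariance of orbit twistedness, guaranteed by Proposition~\ref{prop:irwin-eqv}, immediately separates $f^t_{j_3}$ (both orbits untwisted) from $f^t_{j_4}$ and $f^t_{j_5}$ (both orbits twisted). The remaining inequivalences, namely $f^t_{j_1}\not\sim f^t_{j_2}$ on the torus and $f^t_{j_4}\not\sim f^t_{j_5}$ on the Klein bottle, are the main obstacle, since these pairs share the same twist invariant. To handle them I would use the cross-section $\Sigma_j$: each heteroclinic trajectory meets $\Sigma_j$ in exactly one point, and the two continuous limit maps $\phi_A\colon\Sigma_j\to A_j$ and $\phi_R\colon\Sigma_j\to R_j$ give a well-defined ``holonomy'' between the oriented attractor and repeller. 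A direct computation on the model representatives should show this holonomy has degree $+1$ in one case and $-1$ in the other, and this degree is preserved by any topological equivalence because such an equivalence sends $\Sigma_j$ to $\Sigma_{j'}$ (Lemma~\ref{sig}) and preserves orbit orientations.

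I anticipate that the enumeration in the first two steps is mechanical bookkeeping with the criterion, while constructing the holonomy invariant and verifying its topological invariance uniformly across the orientable and non-orientable cases is where the genuine care lies.
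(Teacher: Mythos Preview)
Your proposal is correct but longer than necessary, and differs from the paper's proof in one essential respect.

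The paper's argument uses Theorem~\ref{criteria} for \emph{both} directions at once. Because the criterion is an ``if and only if'' statement, once you have translated the conditions $i_*h_{0*}=i_*$ and $i_*h_{1*}=i_*$ into ``$h_0$ and $h_1$ are orientation-preserving on each circle component'', the equivalence $f^t_j\sim f^t_{j'}$ reduces to the purely combinatorial condition that $j'j^{-1}$ is orientation-preserving on each component. An exhaustive check over the finitely many orientation types of $j$ then shows simultaneously that every $j$ matches exactly one $j_i$ and that no two $j_i$ match each other. In particular, $f^t_{j_1}\not\sim f^t_{j_2}$ and $f^t_{j_4}\not\sim f^t_{j_5}$ fall out of the same case analysis: there is simply no admissible $h_0$ for these pairs. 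No auxiliary invariant is needed.

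Your holonomy construction would work, but it duplicates information already contained in the criterion; what you call the ``main obstacle'' is in fact already settled by the double-coset bookkeeping you set up two paragraphs earlier. On the other hand, your explicit treatment of the component-swapping homeomorphisms of $\partial\mathbb V^2_+$ is a genuine refinement: the paper's proof tacitly ignores this possibility when it writes ``equivalent to the fact that $j'j^{-1}$ preserves orientation'', whereas you correctly note that a swap with degree $+1$ on each factor also satisfies $i_*h_{0*}=i_*$ and is precisely what identifies the two mixed-orientation gluings into the single class of $j_3$.
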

\begin{proof} Since the fundamental group of the circle is isomorphic to the group $\mathbb Z$ then every orientation-preserving homeomorphism induces identical action in the fundamental group and the orientation-reversing induces action which changes the  homotopy type of the curve on the opposite. Then, by virtue of  Theorem~\ref{criteria}, flows $f^t_j\colon M^2_j\to M^2_j,\,f^t_{j'}\colon M^2_{j'}\to M^2_{j'}$ are topologically equivalent if and only if there is the orientation-preserving homeomorphism $h_0\colon  \partial \mathbb V^2_\pm \to \partial \mathbb V^2_\pm$ such that the homeomorphism $h_1 = j' h_0j^{-1}$ preserves orientation. That is equivalent to the fact that $j'j^{-1}$ preserves orientation.
	
The exhaustive search of all  possible combinations of the  orientability of the homeomorphism $j$ on the connected components gives that the homeomorphism $j_ij^{-1}$ preserves the orientation exactly for unique  value $i\in\{1,\dots,5\}$. Moreover, if $i=1,2$ the ambient manifold is the torus, but for $i=3,4,5$ is the Klein bottle. Finally, if $i=4,5$ then both orbits are twisted and untwisted in the remaining cases.
\end{proof}

\section{Classification of 3-dimensional model flows}

Let $f^t_j\colon M^3_{j}\to M^3_{j}$ be a three-dimensional model flow. Then the ambient manifold $M^3_{j}$ has a form $M^3_{j}=\mathbb V^3_\pm\times\{0\}\cup_J\mathbb V^3_\pm\times\{1\}$ where $J\colon \partial\mathbb V^3_\pm\times\{0\}\to\partial\mathbb V^3_\pm\times\{1\}$ is a homeomorphism defined by  $J(s,0)=(j(s),1)$ for  $j\colon \partial \mathbb V^3_\pm\to \partial \mathbb V^3_\pm$. It is easy to see that the manifold $\mathbb V^3_+$ is the solid torus whereas $\mathbb V^3_-$ is the solid Klein bottle. In the first case ambient manifold $M^3_{j}$ is a lens space, which is orientable and, by Proposition~\ref{3lens_class} below, there are countably many pairwise non-homeomorphic lens spaces. All the manifolds obtained by gluing the solid Klein bottles, on the contrary, are homeomorphic to the $S^2\widetilde{\times} S^1$ (see e.g~\cite[section 3.1(c)]{3manif2bundl}). 

Let us prove Theorem~\ref{G2M3} separately for orientable and non-orientable manifolds.

\subsection{Orientable case}
On the torus $\partial{\mathbb V}^3_+$ the curves $\alpha_+$ and $\beta_+$ are generators in the torus fundamental group. The oriented curves $\alpha_+, \beta_+$ on the torus $\partial{\mathbb V}^3_+$ are said to be the \emph{longitude and meridian} respectively. The action of the homeomorphism $j$ in the fundamental group of the torus is uniquely defined by an unimodular integer matrix $$j_*=\begin{pmatrix}
	r & p\\
	s & q
\end{pmatrix}.$$
\begin{figure}[h]
	\centerline{\includegraphics[width=0.9\textwidth]{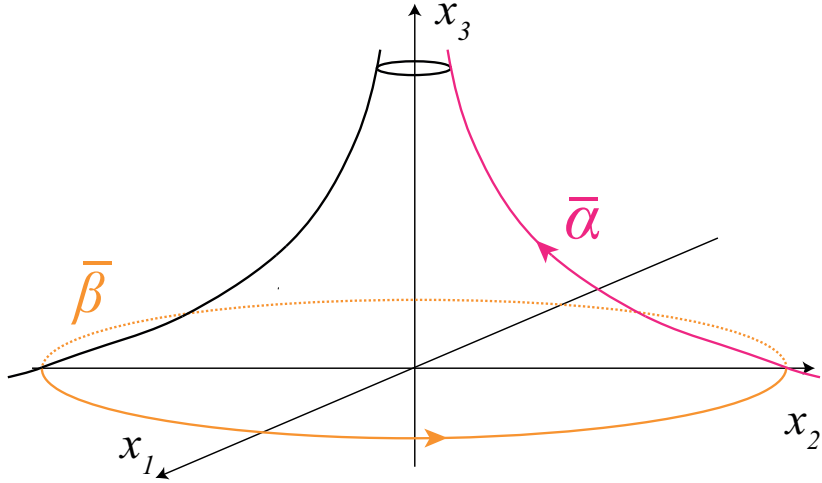}}
	\caption{Longitude and meridian pre-images on $\bar V^3$}\label{V_a_b_}
\end{figure}

Recall that the presentation of the lens space in the form $M^3_{j}=\mathbb V^3_\pm\times\{0\}\cup_J\mathbb V^3_\pm\times\{1\}$ is called a {\it Heegaard decomposition} and $\Sigma_j$ is called a {\it Heegaard torus}. Using the uniqueness up to isotopy of the Heegaard torus in every lens space  (see, for example, \cite{Hat}), we will suppose below  that every homeomorphism $h:M^3_{j}\to M^3_{j'}$ possesses a property $$\eta_1(h(p_j(\mathbb V^3_+\times\{0\})))=p_{j'}(\mathbb V^3_+\times\{0\}),$$ here $\eta_t:M^3_{j'}\to M^3_{j'},\,t\in[0,1]$ is an isotopy which moves $h(\Sigma_j)$ to $\Sigma_{j'}$. The classification of the lens spaces up to a such sort of homeomorphisms has the following view. 

\begin{proposition}[Lens space classification, \cite{BO}]\label{len-class} Two lens spaces $M^3_{j}$ and $M^3_{j'}$ are homeomorphic if and only if  the induced isomorphisms $j_*=\begin{pmatrix}
		r & p\\
		s & q
	\end{pmatrix},\, j'_*=\begin{pmatrix}
		r' & p'\\
		s' & q'
	\end{pmatrix}$ satisfy the conditions $|p'|= |p|$ and $q' \equiv \pm q \pmod{|p|}$\footnote{Considering the fact  that matrices $j_*,\,j'_*$ are unimodular it is easy to establish one more criteria for two lens spaces to be homeomorphic. Namely, two lens spaces $M^3_{j}$ and $M^3_{j'}$ are homeomorphic if and only if  the ifinduced isomorphisms $j_*=\begin{pmatrix}
		r & p\\
		s & q
	\end{pmatrix},\, j'_*=\begin{pmatrix}
		r' & p'\\
		s' & q'
	\end{pmatrix}$ satisfy the relations $|p'|= |p|$ and $r' \equiv \pm r \pmod{|p|}$.}.\label{3lens_class}
\end{proposition}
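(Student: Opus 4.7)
The authors have already reduced the homeomorphism problem to maps $h:M^3_j \to M^3_{j'}$ that, after an ambient isotopy, send the solid torus $V_{R_j}=p_j(\mathbb V^3_+\times\{0\})$ onto $V_{R_{j'}}$ (and consequently $V_{A_j}$ onto $V_{A_{j'}}$). My plan is to exploit this reduction to replace the classification question by an elementary linear-algebra condition on the matrices $j_*,j'_*$, the point being that such an $h$ splits into a pair of solid-torus homeomorphisms whose boundary restrictions are very constrained.

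The key structural observation is that the meridian $\beta_+\subset\partial\mathbb V^3_+$ is, up to isotopy, the unique essential simple closed curve bounding a disk inside $\mathbb V^3_+$. Consequently any homeomorphism $h_0:V_{R_j}\to V_{R_{j'}}$ of solid tori must send $\beta_+$ to $\pm\beta_+$, so in the basis $(\alpha_+,\beta_+)$ its boundary restriction has matrix
$$h_{0*}=\begin{pmatrix}\epsilon_0 & 0 \\ m_0 & \delta_0\end{pmatrix},\qquad \epsilon_0,\delta_0\in\{\pm1\},\ m_0\in\mathbb Z,$$
and analogously $h_{1*}$ with parameters $\epsilon_1,\delta_1,m_1$ for $h_1:V_{A_j}\to V_{A_{j'}}$. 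Compatibility of $h$ with the two gluings reads $h_{1*}\circ j_*=j'_*\circ h_{0*}$, that is, $j'_*=h_{1*}\,j_*\,h_{0*}^{-1}$. A direct computation of the $(1,2)$- and $(2,2)$-entries then yields $p'=\delta_0\epsilon_1 p$ and $q'=\delta_0\delta_1 q+\delta_0 m_1 p$, from which $|p'|=|p|$ and $q'\equiv\pm q\pmod{|p|}$ follow immediately. This handles necessity.

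For the converse, given $p,q,p',q'$ satisfying these relations, I would reverse the computation: choose signs $\epsilon_0,\delta_0,\epsilon_1,\delta_1\in\{\pm1\}$ and an integer $m_1$ realising $p'=\delta_0\epsilon_1 p$ and $q'=\delta_0\delta_1 q+\delta_0 m_1 p$, then adjust $m_0$ so the $(1,1)$- and $(2,1)$-entries of $h_{1*}j_*h_{0*}^{-1}$ agree with those of $j'_*$. Any automorphism of the torus that fixes the meridian class up to sign extends across the solid torus (radially on the meridian disks, then longitudinally along the core), so the boundary maps lift to solid-torus homeomorphisms $h_0,h_1$, which glue along the Heegaard torus to the desired $h:M^3_j\to M^3_{j'}$.

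The conceptually heaviest step is not the matrix calculation but the reduction itself, which the authors outsource to Bonahon--Otal's uniqueness-up-to-isotopy of Heegaard tori in lens spaces. Without that reduction one would be forced to invoke Reidemeister torsion (the classical Reidemeister--Franz classification), and the conclusion would weaken to $q'\equiv\pm q^{\pm1}\pmod{|p|}$; the extra $q^{\pm1}$ ambiguity corresponds precisely to swapping the two sides of the Heegaard splitting, which the attractor-repeller labelling forbids here. A minor subsidiary issue is tracking signs carefully so that both orientation-preserving and orientation-reversing homeomorphisms are covered by the same $\pm$ notation in the statement.
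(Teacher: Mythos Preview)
The paper gives no proof of this proposition: it is simply quoted from Bonahon--Otal~[BO], with the preceding paragraph already invoking (via Hatcher) the uniqueness up to isotopy of the genus-one Heegaard torus in a lens space and restricting attention to homeomorphisms that respect the labelled sides of the splitting. Your proposal is precisely the standard derivation of the stated condition from that uniqueness, and the matrix computation you outline is correct; in effect you are making explicit the routine step the paper leaves implicit between ``Heegaard torus is unique'' and ``hence the matrices are related as in the proposition''.

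Your closing remark is well taken and worth emphasising: the deep input here is the Bonahon--Otal uniqueness of the Heegaard torus, not the linear algebra, so in a sense you are deducing the proposition from itself (the paper cites Hatcher for the uniqueness, but historically that result is exactly what [BO] proves). You are also right that the full homeomorphism classification of lens spaces reads $q'\equiv\pm q^{\pm1}\pmod{|p|}$; the absence of the $q^{-1}$ option in the paper's statement reflects the prior restriction to homeomorphisms carrying $p_j(\mathbb V^3_+\times\{0\})$ to $p_{j'}(\mathbb V^3_+\times\{0\})$, which in the dynamical setting is forced by the attractor/repeller labelling. One small caution for the sufficiency direction: when you ``adjust $m_0$'' you should also check that the determinants match, i.e.\ that $\epsilon_0\epsilon_1\delta_0\delta_1\det j_*=\det j'_*$; this follows from unimodularity once $|p|>2$ (since $rq\equiv\det j_*$ and $r'q'\equiv\det j'_*$ mod $|p|$), and in the remaining small cases the extra free sign $\delta_0$ absorbs it.
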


The following lemma is a refinement of Proposition \ref{len-class} which we need to prove Theorem~\ref{3lens_class}.

\begin{lemma}\label{p=p} A homeomorphism $h_0\colon\partial\mathbb V^3_+\to\partial\mathbb V^3_+$ with property  $i_*h_{k*} = i_*$ for $k=0,1$  and $h_1 = j' h_0 j^{-1}$ there is if and only if  the induced isomorphisms $j_*=\begin{pmatrix}
		r & p\\
		s & q
	\end{pmatrix},\, j'_*=\begin{pmatrix}
		r' & p'\\
		s' & q'
	\end{pmatrix}$ satisfy the conditions $|p'| = |p|, r' \equiv r \pmod{|p|}$.
\end{lemma}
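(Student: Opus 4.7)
I would translate the conditions into linear algebra over $\mathbb Z$ on the basis $(\alpha_+,\beta_+)$ of $\pi_1(\partial \mathbb V^3_+)\cong\mathbb Z^2$ and then exploit the unimodularity of $j_*,j'_*$ to nail down the precise congruence $r'\equiv r\pmod{|p|}$, distinguishing this statement from the weaker $r'\equiv\pm r\pmod{|p|}$ of Proposition~\ref{len-class}.

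\textbf{Step 1 (translate the hypothesis).} Since $\pi_1(\mathbb V^3_+)\cong\mathbb Z$ is generated by $\alpha_+$, the inclusion map $i_*$ takes $\alpha_+\mapsto 1$ and kills the meridian $\beta_+$. Thus $i_*h_{0*}=i_*$ forces the first row of the matrix of $h_{0*}$ to be $(1\ 0)$, and because $h_0$ is a torus self-homeomorphism its matrix is unimodular. So $h_{0*}=\bigl(\begin{smallmatrix}1&0\\ c&\epsilon\end{smallmatrix}\bigr)$ with $\epsilon=\pm1$, $c\in\mathbb Z$, and similarly $h_{1*}=\bigl(\begin{smallmatrix}1&0\\ c'&\epsilon'\end{smallmatrix}\bigr)$.

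\textbf{Step 2 (force $|p'|=|p|$).} Expanding the matrix identity $h_{1*}=j'_*h_{0*}j_*^{-1}$ with $\delta=\det j_*=rq-sp=\pm1$, the $(1,2)$-entry of $\delta^{-1}j'_*h_{0*}j_*^{-1}$ vanishes exactly when
\[
p(r'+cp')=\epsilon r p'.
\]
Unimodularity of $j_*$ and $j'_*$ gives $\gcd(r,p)=\gcd(r',p')=1$, whence $p'\mid pr'\Rightarrow p'\mid p$, and by the symmetric argument $p\mid p'$. Thus $|p'|=|p|$; write $p'=\sigma p$ with $\sigma=\pm1$. Substituting back yields the key relation $r'=\epsilon\sigma\, r-cp'$.

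\textbf{Step 3 (the sign rigidity — the main subtle point).} Imposing that the $(1,1)$-entry equals $1$ gives $\delta\!\left[q(r'+cp')-\epsilon sp'\right]=1$. Substituting $r'+cp'=\epsilon\sigma r$ and $p'=\sigma p$, this simplifies to
\[
\delta\epsilon\sigma\,(rq-sp)=1,\qquad\text{i.e.}\qquad \delta^2\epsilon\sigma=1,
\]
so $\epsilon\sigma=1$. This is the step that distinguishes the lemma from the bare homeomorphism classification of lens spaces: unimodularity of \emph{both} gluing matrices eliminates the $\pm$ ambiguity. Combined with Step 2, $r'=r-cp'$, i.e.\ $r'\equiv r\pmod{|p|}$.

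\textbf{Step 4 (converse).} Given $|p'|=|p|$ and $r'\equiv r\pmod{|p|}$, set $\epsilon=\sigma=p'/p\in\{\pm1\}$ and $c=(r-r')/p'\in\mathbb Z$. The matrix $\bigl(\begin{smallmatrix}1&0\\ c&\epsilon\end{smallmatrix}\bigr)$ is realized by a composition of $|c|$ Dehn twists about the meridian with, if $\epsilon=-1$, a meridian-preserving reflection; this produces $h_0$. A direct check shows $h_1=j'h_0 j^{-1}$ has matrix $\bigl(\begin{smallmatrix}1&0\\ c'&\epsilon'\end{smallmatrix}\bigr)$ with $\epsilon'=\epsilon\,\det(j_*)\det(j'_*)=\pm1$, so $i_*h_{1*}=i_*$ is satisfied. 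The main obstacle of the whole proof is Step 3; once the sign identity $\epsilon\sigma=1$ is extracted from the unimodularity relations, both directions of the equivalence follow by routine matrix algebra.
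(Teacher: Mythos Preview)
Your argument is correct and rests on the same translation the paper uses: $i_*h_{k*}=i_*$ forces the first row of $h_{k*}$ to be $(1\ 0)$, and the rest is unimodular linear algebra. The organizational difference, however, is worth noting. You invert $j_*$ and compute $h_{1*}=j'_*h_{0*}j_*^{-1}$ explicitly, then impose that its first row be $(1\ 0)$; this is what forces you through the gcd argument in Step~2 and the separate sign extraction $\epsilon\sigma=1$ in Step~3. The paper avoids all of that by \emph{not} inverting: it writes the identity as $h_{1*}j_*=j'_*h_{0*}$ and compares first rows. Since the first row of $h_{1*}$ is $(1\ 0)$, the left side has first row $(r\ p)$; the right side has first row $(r'+m_0p',\ \epsilon_0 p')$. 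Equating gives $p=\epsilon_0 p'$ and $r=r'+m_0p'$ in one line—no divisibility, no sign-chasing. So what you flag as ``the main subtle point'' is really an artifact of having moved $j_*$ to the other side; in the paper's arrangement it never appears.

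One small gap: your Step~4 sets $\sigma=p'/p$ and $c=(r-r')/p'$, both undefined when $p=p'=0$ (the case $M^3_+\cong\mathbb S^2\times\mathbb S^1$). The paper's formulation $r=r'+m_0p'$ handles this uniformly, whereas your construction needs a one-line patch (take $h_{0*}=I$ and check directly that the first row of $j'_*j_*^{-1}$ is $(1\ 0)$ using $r=r'=\pm1$, $q=\pm1$).
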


\begin{proof} \textit{Necessity.} Assume there is a homeomorphism $h_0\colon \partial \mathbb V^3_+\to \partial \mathbb V^3_+$ such that for $h_0$ and $h_1 = j' h_0 j^{-1}$ the condition $i_*h_{k*} = i_*, k=0,1$ holds. Then the homeomorphism $h_k$ acts in the fundamental group of the torus with the matrix
	$$h_{k *} = \begin{pmatrix}
		1 & 0\\
		m_k & \pm 1
	\end{pmatrix}$$
	where $m_k$ is integer and
	$$h_{1*} j_* = j'_* h_{0*}.$$
	The last relation can be written in the following matrix form:
	\begin{equation}\label{matr:p=p}
		\begin{pmatrix}
			1 & 0 \\
			m_1& \pm 1
		\end{pmatrix}  
		\begin{pmatrix}
			r & p \\
			s & q
		\end{pmatrix}
		=
		\begin{pmatrix}
			r' & p' \\
			s' & q'
		\end{pmatrix} \begin{pmatrix}
			1 & 0\\
			m_0 & \pm 1
		\end{pmatrix}. 
	\end{equation}
	So, we get the equalities $p = \pm p',\ r = r' + m_0p'$, which are equivalent to $|p'|= |p|$ and $r' \equiv r \pmod{|p|}$. 
	
\textit{Sufficiency.} Let the elements of matrices $j_*=\begin{pmatrix}
		r & p\\
		s & q
	\end{pmatrix},\, j'_*=\begin{pmatrix}
		r' & p'\\
		s' & q'
	\end{pmatrix}$ satisfy the relations $|p'|= |p|$ and $r' \equiv r \pmod{|p|}$. Thus 
	\begin{equation}\label{pr0}
		p = \delta_0 p',\,\,r = r' + m_0p'
	\end{equation} for some $\delta_0\in\{-1,1\},\ m_0\in\mathbb{Z}$.
	Let $h_0\colon \partial \mathbb{V}^3_+\to\partial \mathbb{V}^3_+$ be the algebraic torus automorphism defined by the matrix $h_{0*} = \begin{pmatrix}
		1 & 0\\
		m_0 & \delta_0
	\end{pmatrix}$. 
	Then $i_*h_{0*} = i_*$. Formula (\ref{matr:p=p}) and the fact that all the matrices are unimodular give us that the homeomorphism $h_1=j'h_0j^{-1}\colon\partial \mathbb{V}^3_+\to\partial \mathbb{V}^3_+$ induces the isomorphism, defined by the matrix $h_{1*} = \begin{pmatrix}
		1 & 0\\
		m_1 & \delta_1
	\end{pmatrix}$ for some $\delta_1\in\{-1,1\},\ m_1\in\mathbb{Z}$. 
	Thus $i_*h_{1*} = i_*$.
\end{proof}

\begin{lemma}\label{3+}
	Up to topological equivalence there is only one flow on both $\mathbb S^3$ and $\mathbb RP^3$ and two flows on the remaining lens spaces.
\end{lemma}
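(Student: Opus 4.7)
The plan is to derive Lemma~\ref{3+} by directly comparing the flow-equivalence criterion of Lemma~\ref{p=p} with the ambient manifold classification of Proposition~\ref{len-class}.

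Fix a lens space $L$ and collect all model flows $f^t_j \in G_2(L)$. By Proposition~\ref{len-class} in the equivalent form given in its footnote, these correspond precisely to unimodular matrices
$$j_* = \begin{pmatrix} r & p \\ s & q \end{pmatrix}$$
whose invariants $(|p|, r \bmod |p|)$ agree with those of $L$ \emph{up to a sign in the $r$-entry}. On the other hand, Lemma~\ref{p=p} combined with Theorem~\ref{criteria} tells us that two such model flows are topologically equivalent if and only if $|p'|=|p|$ and $r' \equiv r \pmod{|p|}$, with \emph{no} sign ambiguity. Hence flow-equivalence is a strictly finer relation than lens space homeomorphism, and the set of topological equivalence classes in $G_2(L)$ is in bijection with the subset $\{r \bmod |p|,\; -r \bmod |p|\}$ of $\mathbb Z/|p|\mathbb Z$.

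This subset has one element precisely when $r \equiv -r \pmod{|p|}$, i.e.\ when $|p| \mid 2r$, and two elements otherwise. The unimodularity $rq - ps = \pm 1$ of $j_*$ forces $\gcd(r,p) = 1$, so $|p| \mid 2r$ is equivalent to $|p| \mid 2$, hence to $|p| \in \{1,2\}$. The case $|p|=1$ gives $L \cong \mathbb S^3$ and the case $|p|=2$ gives $L \cong \mathbb{RP}^3$; on both of these the flow is unique up to topological equivalence. For every remaining lens space $|p| \geq 3$, the residues $r$ and $-r$ modulo $|p|$ are distinct, and both are realized by a genuine model flow: given coprime integers $r$ and $p$, Bezout provides $s,q$ with $rq - ps = \pm 1$, and the same argument applied to $-r$ produces the second gluing. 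Thus exactly two equivalence classes occur in $G_2(L)$.

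The principal obstacle is bookkeeping rather than substance: one must carefully align the criteria of Lemma~\ref{p=p} and Proposition~\ref{len-class} in a common form (both expressed via $r \bmod |p|$ using the footnote of Proposition~\ref{len-class}) so that the ``finer versus coarser'' comparison is transparent, and then verify that both residues $\pm r$ are simultaneously realizable by unimodular matrices on the same lens space. Once that alignment is made, the numerical split into the cases $|p| \in \{1,2\}$ versus $|p| \geq 3$ reduces to the elementary observation that $\gcd(r,p)=1$ together with $|p| \mid 2r$ forces $|p| \leq 2$.
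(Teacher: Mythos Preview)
Your argument is correct and follows essentially the same route as the paper: both reduce, via Lemma~\ref{p=p} and Proposition~\ref{len-class}, to deciding when $r\equiv -r\pmod{|p|}$ under the constraint $\gcd(r,p)=1$, and your divisibility shortcut $|p|\mid 2r\Leftrightarrow |p|\mid 2$ is just a tidier packaging of the paper's case-by-case check on the parity of $p$. One small omission: your closing enumeration jumps from $|p|\in\{1,2\}$ to ``every remaining lens space $|p|\geq 3$'', silently skipping $|p|=0$ (the lens space $\mathbb S^2\times\mathbb S^1$); your own criterion already handles it correctly since $0\notin\{1,2\}$, but you should say so explicitly.
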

\begin{proof} By virtue of  Proposition \ref{len-class} and Lemma~\ref{p=p},  on the same lens space there are either one or two topological equivalence classes of flows from $G_2(M^3_+)$ and the cases are distinguished by the following condition: whether for two coprime numbers $p\geqslant 0,\,r\in\mathbb Z_p$ there are two numbers $n_1,\,n_2$ such that \begin{equation}\label{-+} r + n_1p = - r + n_2p
	\end{equation}
Check whether condition (\ref{-+}) holds for all the possible values of $p$.
\begin{enumerate}
		\item if $p=0$ than condition (\ref{-+}) does not hold, since the equality is true only if $r=0$ but in this case $r,p$ are not coprime;
\item if $p=1$ the condition holds for $r=0$, it means that up to topological equivalence there is a  unique flow in $G_2(\mathbb{S}^3)$;
\item if $p=2k$ then $r = k(n_2 - n_1)$. Considering the fact that $(r, p) = 1$ deduce that $k = 1, r = 1$, which means that up to topological equivalence there is a unique flow in $G_2(\mathbb{R}P^3)$;
\item for $p= 2k + 1, k > 0$ condition (\ref{-+}) never holds since $n_2 - n_1$ is even and $(r, p) \neq 1$.
	\end{enumerate}
\end{proof}

\subsection{Non-orientable case}
By  \cite{Lickorish} every homeomorphism $j\colon \partial \mathbb V^3_-\to \partial \mathbb V^3_-$ satisfy either $i_*(j_*([c]))=i_*([c])$ or $i_*(j_*([c]))=-i_*([c])$. Then Theorem~\ref{criteria} implies that there are two topological equivalence classes of the flows in $G_2(\mathbb S^2\widetilde{\times} S^1)$.

\section{Classification of $n$-dimensional model flows for $n>3$}
According to~\cite{Max}, every homeomophism $j\colon \partial \mathbb V^n_+\to \partial \mathbb V^n_+$ can be extended to a  homeomorphism $\phi\colon  \mathbb V^n_+\to  \mathbb V^n_+ $. Thus  the only manifold obtained by gluing two copies of $\mathbb V^n_+$ along the boundaries is $\mathbb S^{n-1}\times \mathbb{S}^1$. Similarly, the results of the article~\cite{Kwasik} imply that the only manifold obtained by gluing two copies of $\mathbb V^n_-$ along the boundaries is $\mathbb S^{n-1} \tilde\times \mathbb S^1$.

Since the fundamental group $\pi_1(\partial \mathbb V^n_\pm)$ is isomorphic to the group $\mathbb Z$ then any homeomorphism $j\colon \partial \mathbb V^n_\pm\to \partial \mathbb V^n_\pm$ satisfies either $i_*(j_*([c]))=i_*([c])$ or $i_*(j_*([c]))=-i_*([c])$. Thus, Theorem~\ref{criteria} implies that each of the manifolds $\mathbb S^{n-1}\times \mathbb{S}^1$ and $\mathbb S^{n-1} \tilde\times \mathbb S^1$ admits two topological equivalence classes of the flows from $G_2(M^n), n>3$.

\end{document}